\documentclass[11pt,a4paper]{article}
\usepackage{epsf, graphicx}
\usepackage{latexsym,amsfonts,amsbsy,amssymb}
\usepackage{amsmath,amsthm}
\usepackage{cite}
\usepackage{subfigure}

\textwidth=15cm \textheight=22cm \topmargin 0 cm \oddsidemargin 0in
 \evensidemargin 0in \baselineskip= 12pt
\parindent=12pt
\parskip=3pt
\overfullrule=0pt
\bibliographystyle{plain}
\makeatletter

\@addtoreset{equation}{section} \makeatother
\newtheorem{theorem}{Theorem}[section]
\newtheorem{lemma}{Lemma}[section]

\setlength{\parindent}{0.9cm}
\setlength{\parskip}{3ptplus1ptminus2pt}
\setlength{\baselineskip}{12pt plus2pt minus1pt}
\setlength{\topmargin}{2.5 cm} \setlength{\headheight}{0cm}
\setlength{\textheight}{26.5 cm} \setlength{\textwidth}{15 cm}
\makeatletter \@addtoreset{equation}{section} \makeatother
\textheight=24cm \textwidth=16cm
\parskip = 0.5cm

\topmargin=1cm \oddsidemargin=0cm \evensidemargin=0cm
\textwidth=15cm \textheight=22cm \topmargin 0 cm \oddsidemargin 0in
 \evensidemargin 0in \baselineskip= 12pt
\parindent=12pt
\parskip=3pt
\overfullrule=0pt
\bibliographystyle{plain}
\linespread{1.12}
\begin{document}

\title{\bf Metric tensors for the interpolation error and its gradient in $L^p$ norm}
\date{}
 \author{Hehu Xie\footnote{LSEC,
ICMSEC, Academy of Mathematics and Systems Science, CAS, Beijing
100080, China \ email: hhxie@lsec.cc.ac.cn} \quad Xiaobo
Yin\footnote{Department of Mathematics, Central China Normal
University, Wuhan 430079, China \ email: yinxb@lsec.cc.ac.cn}}
 \maketitle
 \begin{quote}
\begin{small}
{\bf Abstract.}\,\, A uniform strategy to derive metric tensors in
two spatial dimension for interpolation errors and their gradients
in $L^p$ norm is presented. It generates anisotropic adaptive meshes
as quasi-uniform ones in corresponding metric space, with the metric
tensor being computed based on a posteriori error estimates in
different norms. Numerical results show that the corresponding
convergence rates are always optimal.

{\bf Keywords.} metric tensor, interpolation; gradient; anisotropic.

{\bf AMS subject classification.} 65N30, 65N50
\end{small}
\end{quote}
\section{Introduction}
Generation of adaptive meshes is now the standard option in most
software packages. Traditionally, isotropic mesh adaptation has
received much attention, where regular mesh elements are only
adjusted in size based on an error estimate. However, for problems
with anisotropic solutions (with, say, sharp boundary or internal
layers), the shape of elements can be further optimized and an
equidistribution of a scalar error density is not sufficient to
ensure that a mesh is optimally efficient \cite{Azevedo1991}. Indeed
anisotropic meshes have been used successfully in many areas, for
example in singular perturbation and flow problems
\cite{Ait-Ali-Yahia,ApelLube,Becker,HabDomBourAitForVal,HabForDomValBou,PerVahMorZ,ZWu}
and in adaptive procedures
\cite{AgouzalLipVass,Borouchaki1,Borouchaki2,BuDa,CasHecMohPir,Hecht,PerVahMorZ,Rachowicz}.
For anisotropic mesh adaptation, the common practice is to generate
the needed anisotropic mesh as a quasi-uniform one in the metric
space determined by a tensor (or a matrix-valued function), always
called monitor function or metric tensor. Both the monitor function
(denoted by the letter $M$) and metric tensor (denoted by the
calligraphy letter $\mathcal{M}$) play the same role in mesh
generation, i.e., they are used to specify the size, shape, and
orientation of mesh elements throughout the physical domain. The
only difference lies in the way they specify the size of elements.
Indeed, the former specifies the element size through the
equidistribution condition, while the latter determines the element
size through the unitary volume requirement. Readers could regard
the metric tensor as normalization for the monitor function.
Examples of anisotropic meshing strategies include blue refinement
\cite{Kornhuber,Lang}, directional refinement \cite{Rachowicz},
Delaunay-type triangulation method
\cite{Borouchaki1,Borouchaki2,CasHecMohPir,PerVahMorZ}, advancing
front method \cite{Garimella}, bubble packing method
\cite{Yamakawa}, local refinement and modification
\cite{HabDomBourAitForVal,Remacle}, variational methods
\cite{Brackbill,Dvinsky,HuangSiam,Jacquotte,Knupp,LiTangZhang}, and
so on. Readers are referred to \cite{FreyGeorge} and \cite{Owen} for
an overview.

Among these meshing strategies, the definition of the metric tensor
(or monitor function) based on the Hessian of the solution seems
widespread in the meshing
community\cite{Agou3d,CasHecMohPir,ChenSunXu,Azevedo1989,Azevedo1991,AzevedoSimpson,George,HabDomBourAitForVal,
Hecht,Huang,HuangRussell,HuangSiam,Remacle,Vassilevski}. Especially,
Huang and Russell \cite{HuangRussell} propose the monitor function
\begin{eqnarray}
M=\det{\Big (}I+\frac{1}{\alpha}|H(u)|{\Big
)}^{-\frac{1}{d+p(2-m)}}{\Big\|}I+\frac{1}{\alpha}|H(u)|{\Big
\|}^{\frac{mp}{d+p(2-m)}}{\Big [}I+\frac{1}{\alpha}|H(u)|{\Big ]},
\end{eqnarray}
for the interpolation error in $W^{m,p}$ norm ($m=0,1$, $p\in
[1,+\infty)$), where $d$ stands for the spatial dimension. Set
$\mathcal{H}=I+\frac{1}{\alpha}|H(u)|$, when $d=2$,
\begin{eqnarray}\label{HR}
M_{m,p}=\det(\mathcal{H} )^{-\frac{1}{2+p(2-m)}}\|\mathcal{H}
\|^{\frac{mp}{2+p(2-m)}}\mathcal{H}.
\end{eqnarray}
Separately, it becomes
\begin{eqnarray}\label{HRlp}
M_{0,p}=\det(\mathcal{H})^{-\frac{1}{2(p+1)}}\mathcal{H},
\end{eqnarray}
for the interpolation error in $L^p$ norm and
\begin{eqnarray}\label{HRw1p}
M_{1,p}=\det(\mathcal{H})^{-\frac{1}{p+2}}\|\mathcal{H}\|^{\frac{p}{p+2}}\mathcal{H}.
\end{eqnarray}
for the gradient of interpolation error in $L^p$ norm.

The objective of this paper is to give a unified strategy deriving
metric tensors in two spatial dimension for interpolation error and
its gradients in $L^p$ norm. The development begin with the error
estimates \cite{Nadler} for $L^2$ norm and our recent work
\cite{YinXie} for $H^1$ norm on linear interpolation for quadratic
functions on triangles. These estimates are anisotropic in the sense
that they allow a full control of the shape of elements when used
within a mesh generation strategy. Using the relationship between
different norms, a posterior error estimates for other norms
($W^{m,p}, m=0,1$, $p\neq 2$) can be gained. We will apply these
error estimates to formulate corresponding metric tensors in a
unified way. The procedure is based on two considerations: on the
one hand the anisotropic mesh is generated as a quasi-uniform mesh
in the metric tensor. On the other hand, the anisotropic mesh is
required to minimize the error for a given number of triangles. To
compare with those existing methods, we list our main results using
monitor function style, that is
\begin{eqnarray}\label{OursLp}
M_{0,p}^{n}({\bf
x})=\det(\mathcal{H})^{-\frac{1}{2(p+1)}}\mathcal{H},
\end{eqnarray}
for interpolation errors in $L^p$ norm and
\begin{eqnarray}\label{Oursw1p}
M_{1,p}^{n}({\bf
x})=\det(\mathcal{H})^{-\frac{1}{p+2}}\mbox{tr}(\mathcal{H})^{\frac{p}{p+2}}\mathcal{H},
\end{eqnarray}
for gradient of interpolation errors in $L^p$ norm. To sum up, the
metric tensor can be expressed by
\begin{eqnarray}\label{Ours}
M_{m,p}^{n}({\bf x})=\det(\mathcal{H}
)^{-\frac{1}{2+p(2-m)}}\mbox{tr}(\mathcal{H}
)^{\frac{mp}{2+p(2-m)}}\mathcal{H},
\end{eqnarray}
for the $W^{m,p}$ norm ($m=0,1$, $p\in (0,+\infty]$) of the
interpolation error.

The paper is organized as follows. In Section 2, we describe the
anisotropic error estimates on linear interpolation for quadratic
functions on triangles obtained in our recent work \cite{YinXie}.
The formulation of the monitor function and metric tensor is
developed in Section 3. Numerical results are presented in Section 4
to illustrating our analysis. Finally, conclusions are drawn in
Section 5.

\section{Estimates for interpolation error and its
gradient}\label{estimate_section} As we know, the interpolation
error depends on the solution, the size and shape of the elements in
the mesh. Understanding this relation is crucial for the generating
efficient meshes for the finite element method. In the mesh
generation community, this relation is studied more closely for the
model problem of interpolating quadratic functions. This treatment
yields a reliable and efficient estimator of the interpolation error
for general functions provided a saturation assumption is valid
\cite{AgouzalVassilevski,DorflerNochetto}. For instance, Nadler
\cite{Nadler} derived an exact expression for the $L^2$-norm of the
linear interpolation error in terms of the three sides ${\bf
\ell}_1$, ${\bf \ell}_2$, and ${\bf \ell}_3$ of the triangle $K$,
\begin{eqnarray}\label{Nadler}
\|u-u_I\|^2_{L^2(K)}=\frac{|K|}{180}{\Big[}{\Big(}d_1+d_2+d_3{\Big)}^2+d_1d_2+d_2d_3+d_1d_3{\Big]},
\end{eqnarray}
where $|K|$ is the area of the triangle, $d_i = {\bf \ell}_i\cdot
H{\bf \ell}_i$ with $H$ being the Hessian of $u$. Assuming
$u=\lambda_1x^2+\lambda_2y^2$, D'Azevedo and Simpson
\cite{Azevedo1989} derived the exact formula for the maximum norm of
the interpolation error
\begin{eqnarray}\label{AzeSim}
\|(u-u_I)\|^2_{L^{\infty}(K)}=\frac{D_{12}D_{23}D_{31}}{16\lambda_1\lambda_2|K|^2},
\end{eqnarray}
where $D_{ij}={\bf \ell}_i\cdot \mbox{diag}(\lambda_1,\lambda_2){\bf
\ell}_j$. Based on the geometric interpretation of this formula,
they proved that for a fixed area the optimal triangle, which
produces the smallest maximum interpolation error, is the one
obtained by compressing an equilateral triangle by factors
$\sqrt{\lambda_1}$ and $\sqrt{\lambda_2}$ along the two eigenvectors
of the Hessian of $u$. Furthermore, the optimal incidence for a
given set of interpolation points is the Delaunay triangulation
based on the stretching map (by factors $\sqrt{\lambda_1}$ and
$\sqrt{\lambda_2}$ along the two eigenvector directions) of the grid
points. Rippa \cite{Rippa} showed that the mesh obtained in this way
is also optimal for the $L^p$-norm of the error for any $1\leq
p\leq\infty$.

The element-wise error estimates in the following theorem are
developed in \cite{YinXie} using the theory of interpolation and
proper numerical quadrature formula.

\begin{theorem}\label{Theorem2.1}
Let $u$ be a quadratic function and $u_I$ is the Lagrangian linear
finite element interpolation of $u$. The following relationship
holds:
\begin{eqnarray}\label{error_estimate_H1}
\|\nabla(u-u_I)\|^2_{L^2(K)}=\frac{1}{48|K|}\sum_{i=1}^{3}({\bf
\ell}_{i+1}\cdot H{\bf \ell}_{i+2})^2|{\bf \ell}_i|^2,
\end{eqnarray}
where we prescribe $i+3=i,i-3=i$.
\end{theorem}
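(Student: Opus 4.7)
The plan is to work in barycentric coordinates. Let $P_1,P_2,P_3$ be the vertices of $K$, with the edge convention $\ell_i=P_{i+2}-P_{i+1}$ (so that $\ell_1+\ell_2+\ell_3=0$), and let $\lambda_1,\lambda_2,\lambda_3$ denote the associated barycentric coordinates. Since $u$ is quadratic and $u_I$ is linear, the error $e:=u-u_I$ is quadratic and vanishes at the three vertices, so it admits a unique representation $e=a_1\lambda_2\lambda_3+a_2\lambda_3\lambda_1+a_3\lambda_1\lambda_2$. First I would identify the coefficients by evaluating $e$ at the midpoint $M_i$ of the edge opposite $P_i$: because $u$ is quadratic, $u(M_i)-\tfrac12\bigl(u(P_{i+1})+u(P_{i+2})\bigr)=-\tfrac18\,\ell_i\cdot H\ell_i$, while $u_I$ is affine, hence $e(M_i)=-d_i/8$ with $d_i=\ell_i\cdot H\ell_i$. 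On the other hand the barycentric formula gives $e(M_i)=a_i/4$, so $a_i=-d_i/2$.

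Next, differentiate to obtain $\nabla e=-\tfrac12\sum_i d_i(\lambda_{i+1}\nabla\lambda_{i+2}+\lambda_{i+2}\nabla\lambda_{i+1})$, which is a linear vector field, so $|\nabla e|^2$ is quadratic on $K$ and the midpoint quadrature rule $\int_K f\,dA=\tfrac{|K|}{3}\sum_{i=1}^{3} f(M_i)$ evaluates $\int_K|\nabla e|^2\,dA$ exactly. At $M_i$ one has $\lambda_i=0$ and $\lambda_{i+1}=\lambda_{i+2}=\tfrac12$; using $\nabla\lambda_1+\nabla\lambda_2+\nabla\lambda_3=0$ every contribution collapses into a multiple of $\nabla\lambda_i$, giving $\nabla e(M_i)=-\tfrac14(d_{i+1}+d_{i+2}-d_i)\nabla\lambda_i$. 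Combined with the standard identity $|\nabla\lambda_i|=|\ell_i|/(2|K|)$ this yields $|\nabla e(M_i)|^2=(d_{i+1}+d_{i+2}-d_i)^2|\ell_i|^2/(64|K|^2)$.

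The final step is to recognise the symmetric geometric quantity stated in the theorem. Expanding $\ell_i=-(\ell_{i+1}+\ell_{i+2})$ in the $H$-bilinear form gives $d_i=d_{i+1}+d_{i+2}+2\,\ell_{i+1}\cdot H\ell_{i+2}$, that is, $d_{i+1}+d_{i+2}-d_i=-2\,\ell_{i+1}\cdot H\ell_{i+2}$. Substituting into the quadrature sum yields
\[
\|\nabla(u-u_I)\|^2_{L^2(K)}=\frac{|K|}{3}\sum_{i=1}^{3}\frac{(2\,\ell_{i+1}\cdot H\ell_{i+2})^2\,|\ell_i|^2}{64|K|^2}=\frac{1}{48|K|}\sum_{i=1}^{3}(\ell_{i+1}\cdot H\ell_{i+2})^2|\ell_i|^2,
\]
which is the desired identity.

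The computation is essentially routine once the midpoint quadrature is chosen; the main obstacle is combinatorial bookkeeping, namely keeping the cyclic indices and sign conventions consistent so that the three midpoint contributions assemble into the claimed symmetric form. The decisive algebraic observation is the reduction $d_{i+1}+d_{i+2}-d_i=-2\,\ell_{i+1}\cdot H\ell_{i+2}$, which converts the diagonal scalars $d_i$ coming naturally from the interpolation analysis into the off-diagonal mixed products $\ell_{i+1}\cdot H\ell_{i+2}$ appearing in the statement, thereby exposing the full anisotropic dependence on the triangle geometry.
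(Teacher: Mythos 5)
Your proof is correct, and it follows the same route the paper attributes to its source \cite{YinXie}, namely interpolation theory combined with a quadrature formula exact for quadratics (here the edge-midpoint rule): the identification $e(M_i)=-d_i/8$, the reduction $\nabla e(M_i)=-\tfrac14(d_{i+1}+d_{i+2}-d_i)\nabla\lambda_i$ with $|\nabla\lambda_i|=|\ell_i|/(2|K|)$, and the key identity $d_{i+1}+d_{i+2}-d_i=-2\,\ell_{i+1}\cdot H\ell_{i+2}$ all check out and assemble into the stated constant $1/(48|K|)$. Note that the paper itself only cites this result rather than proving it, so your argument serves as a complete, self-contained verification consistent with the described approach.
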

To get the a posteriori error estimate of the interpolation error in
$L^p$ and $W^{1,p}$ norms for $p\neq 2$, we need some lemmas below.
\begin{lemma}\label{Dnormequi}
For any $d$ positive numbers $a_1, \cdots, a_d$, the inequalities
\begin{eqnarray}
{\Big (}\sum\limits_{j=1}^{d}a_j^2{\Big )}^\frac{1}{2}\leq {\Big
(}\sum\limits_{j=1}^{d}a_j^p{\Big )}^\frac{1}{p}\leq
d^{\frac{1}{p}-\frac{1}{2}}{\Big (}\sum\limits_{j=1}^{d}a_j^2{\Big
)}^\frac{1}{2},
\end{eqnarray}
and
\begin{eqnarray}
d^{\frac{1}{p}-\frac{1}{2}}{\Big (}\sum\limits_{j=1}^{d}a_j^2{\Big
)}^\frac{1}{2}\leq {\Big (}\sum\limits_{j=1}^{d}a_j^p{\Big
)}^\frac{1}{p}\leq {\Big (}\sum\limits_{j=1}^{d}a_j^2{\Big
)}^\frac{1}{2}
\end{eqnarray}
hold for numbers $0<p<2$ and $p>2$, respectively.
\end{lemma}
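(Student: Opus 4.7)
The plan is to recognize this as a pair of standard equivalences between the $\ell^p$ and $\ell^2$ norms on $\mathbb{R}^d$, and to dispatch each of the four inequalities by one of two complementary tools: H\"older's inequality in one direction, and a scaling (homogeneity) argument combined with a termwise comparison in the other. The two regimes $0<p<2$ and $p>2$ are entirely parallel, with the roles of $p$ and $2$ simply interchanged, so I would prove them in lockstep.

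For the upper bound in the case $0<p<2$, I would apply H\"older's inequality to $\sum_{j=1}^d a_j^p\cdot 1$ with conjugate exponents $2/p$ and $2/(2-p)$, obtaining
\[
\sum_{j=1}^d a_j^p \;\le\; \left(\sum_{j=1}^d a_j^2\right)^{p/2} d^{(2-p)/2}.
\]
Raising to the power $1/p$ produces precisely the factor $d^{1/p-1/2}$ on the right and delivers the stated inequality.

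For the opposite bound $\bigl(\sum_j a_j^2\bigr)^{1/2}\le\bigl(\sum_j a_j^p\bigr)^{1/p}$ in the same regime, I would use the joint homogeneity of both sides in $a$ to reduce to the normalized situation $\sum_j a_j^p=1$. Each $a_j$ then lies in $(0,1]$, and since $2>p$ one has the pointwise inequality $a_j^2\le a_j^p$. Summing over $j$ and taking square roots yields $\bigl(\sum_j a_j^2\bigr)^{1/2}\le 1$, and unscaling recovers the general statement.

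The case $p>2$ is handled by the same pair of tricks with their roles swapped: scaling to $\sum_j a_j^2=1$ together with the reverse termwise inequality $a_j^p\le a_j^2$ delivers the upper bound $\bigl(\sum_j a_j^p\bigr)^{1/p}\le\bigl(\sum_j a_j^2\bigr)^{1/2}$, while applying H\"older to $\sum_j a_j^2\cdot 1$ with conjugate exponents $p/2$ and $p/(p-2)$ gives, after taking square roots, the matching lower bound with the factor $d^{1/p-1/2}$. No substantial obstacle is anticipated; the only point requiring a moment of care is the pairing of H\"older exponents, but this is forced once one decides which of the two sums should appear on the right-hand side.
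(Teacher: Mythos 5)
Your proposal is correct, and it establishes exactly the same two-part decomposition as the paper: the direction with constant $1$ and the direction carrying the factor $d^{1/p-1/2}$, in each of the two regimes. The difference is only in how the two halves are justified. The paper treats both as citations to named inequalities: the bound $\bigl(\sum_j a_j^2\bigr)^{1/2}\le\bigl(\sum_j a_j^p\bigr)^{1/p}$ for $0<p<2$ is attributed to Jensen's inequality (i.e.\ monotonicity of $\ell^p$ norms), and the bound with the factor $d^{1/p-1/2}$ is obtained by inserting the weights $1/d$ and invoking the generalized arithmetic-mean/power-mean inequality $\bigl(\frac{1}{d}\sum_j a_j^p\bigr)^{1/p}\le\bigl(\frac{1}{d}\sum_j a_j^2\bigr)^{1/2}$. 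You instead prove both halves from scratch: your H\"older computation with conjugate exponents $2/p$ and $2/(2-p)$ is precisely a proof of that power-mean step (and your exponent bookkeeping checks out, yielding $d^{(2-p)/(2p)}=d^{1/p-1/2}$), and your normalization-plus-termwise-comparison argument is a clean elementary proof of the norm monotonicity that the paper outsources to Jensen. Your version is therefore more self-contained, and you also carry out the $p>2$ case explicitly where the paper dismisses it as ``similar''; the paper's version is shorter at the cost of leaning on standard inequalities as black boxes. Both are complete and correct.
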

\begin{proof}
We just give the proof for the case $0<p<2$, it is similar for the
case $p>2$.

 For any number $0<p<2$,
\begin{eqnarray*}
{\Big (}\sum\limits_{j=1}^{d}a_j^2{\Big )}^\frac{1}{2}\leq {\Big
(}\sum\limits_{j=1}^{d}a_j^p{\Big )}^\frac{1}{p}
\end{eqnarray*}
holds due to the Jensen's inequality. From the generalized
arithmetic-mean geometric-mean inequality, for any positive numbers
$a_1, \cdots, a_d$,
\begin{eqnarray*}
{\Big (}\sum\limits_{j=1}^{d}\frac{1}{d}a_j^p{\Big
)}^\frac{1}{p}\leq {\Big
(}\sum\limits_{j=1}^{d}\frac{1}{d}a_j^2{\Big )}^\frac{1}{2}.
\end{eqnarray*}
Then
\begin{eqnarray*}
{\Big (}\sum\limits_{j=1}^{d}a_j^p{\Big )}^\frac{1}{p}\leq
d^{\frac{1}{p}-\frac{1}{2}}{\Big (}\sum\limits_{j=1}^{d}a_j^2{\Big
)}^\frac{1}{2}.
\end{eqnarray*}
\end{proof}
To sum up, for any $d$ positive numbers $a_1, \cdots, a_d$, the
inequalities
\begin{eqnarray}\label{Dnorm2-p}
\underline{C}_p{\Big (}\sum\limits_{j=1}^{d}a_j^2{\Big
)}^\frac{1}{2}\leq {\Big (}\sum\limits_{j=1}^{d}a_j^p{\Big
)}^\frac{1}{p}\leq \overline{C}_p{\Big
(}\sum\limits_{j=1}^{d}a_j^2{\Big )}^\frac{1}{2}
\end{eqnarray}
holds for any numbers $p>0$, where $\underline{C}_p=1$ for $0<p<2$
and $d^{\frac{1}{p}-\frac{1}{2}}$ for $p>2$,
$\overline{C}_p=d^{\frac{1}{p}-\frac{1}{2}}$ for $0<p<2$ and $1$ for
$p>2$.

\begin{lemma}\label{Cnormequi} \cite{AgouzalVassilevski}
For any $p\in (0,+\infty]$ and any non-negative $v\in P_2(K)$ it
holds
\begin{eqnarray}
C_{1/p}^{-\frac{1}{p}}|K|^{\frac{1}{p}-1}\|v\|_{L^1(K)}\leq
\|v\|_{L^p(K)}\leq C_p|K|^{\frac{1}{p}-1} \|v\|_{L^1(K)}
\end{eqnarray}
with
\begin{equation*}
     \left \{
     \begin{array}{lll}
     C_p=1 &\mbox{if}\,\, 0<p\leq 1,&\\
     C_p=(d+1)(d+2)(d!)^{\frac{1}{p}}{\Big (}\prod\limits_{j=1}^{d}(p+j){\Big )}^{-\frac{1}{p}}
     &\mbox{if}\,\, 1<p<+\infty,&\\
     C_{\infty}=\lim\limits_{p\rightarrow +\infty}C_p=(d+1)(d+2),\\
     C_{1/\infty}=\lim\limits_{p\rightarrow
     +\infty}C_{1/p}=1.
     \end{array}
     \right .
\end{equation*}
\end{lemma}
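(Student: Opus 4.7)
The plan is to begin by reducing to a reference simplex $\hat K$ (with $|\hat K|=1/d!$) via an affine change of variables, which turns every statement of the form $\|v\|_{L^p(K)}\le C|K|^{1/p-1}\|v\|_{L^1(K)}$ into the analogous statement on $\hat K$: the Jacobian contributes exactly the factor $(|K|/|\hat K|)^{1/p-1}$ needed to clear the $|K|^{1/p-1}$ on the right.

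Next I would split the proof into the cheap side that needs no polynomial structure and the hard side that does. For $0<p\le 1$ the concavity of $t\mapsto t^p$ (Jensen's inequality) gives $\int_K v^p\le|K|^{1-p}(\int_K v)^p$, i.e.\ the upper bound with $C_p=1$; dually, for $p\ge 1$ Hölder's inequality applied to $v\cdot 1$ gives $\|v\|_{L^1(K)}\le|K|^{1-1/p}\|v\|_{L^p(K)}$, i.e.\ the lower bound with $C_{1/p}=1$ (since $1/p\le 1$ in this range). So the only genuinely non-trivial cases are the upper bound when $p>1$ and the lower bound when $0<p<1$; there the polynomial hypothesis is essential, since $L^p$ and $L^1$ are not otherwise equivalent.

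For the non-trivial bounds I would work on the non-negative cone $\mathcal C=\{v\in P_2(\hat K):v\ge 0\}$. The ratio $\|v\|_{L^p(\hat K)}/\|v\|_{L^1(\hat K)}$ is continuous and $0$-homogeneous on $\mathcal C\setminus\{0\}$, hence attains its sup and inf on the compact slice $\{v\in\mathcal C:\|v\|_{L^1}=1\}$. To make the extremal constants explicit, the key ingredient is Dirichlet's integral formula on the simplex,
\[
\int_{\hat K}\lambda_1^{\alpha_1}\cdots\lambda_{d+1}^{\alpha_{d+1}}\,dx=\frac{\Gamma(\alpha_1+1)\cdots\Gamma(\alpha_{d+1}+1)\,d!}{\Gamma(\alpha_1+\cdots+\alpha_{d+1}+d+1)},
\]
which in particular yields $\int_{\hat K}\lambda_i^{p}\,dx=d!/\prod_{j=1}^d(p+j)$; this is precisely the origin of the $(d!/\prod_j(p+j))^{1/p}$ factor in $C_p$. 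The remaining $(d+1)(d+2)$ factor arises from a Markov-type inverse estimate $\|v\|_{L^\infty(\hat K)}\le(d+1)(d+2)|\hat K|^{-1}\|v\|_{L^1(\hat K)}$ valid on $\mathcal C$. Combining the two, for $p>1$ one uses the interpolation $\|v\|_{L^p}^p=\int v\cdot v^{p-1}\le\|v\|_{L^1}\|v\|_{L^\infty}^{p-1}$ together with a refined pointwise comparison of $v$ to a barycentric coordinate $\lambda_i$ to recover precisely the claimed $C_p$. The lower bound for $0<p<1$ is deduced symmetrically by applying the same upper-bound analysis at exponent $q=1/p>1$ to $v$ and rearranging, which accounts for why the stated constant is $C_{1/p}^{-1/p}$.

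The main obstacle I foresee is the Markov-type $L^\infty$--$L^1$ estimate on $\mathcal C$ with the clean constant $(d+1)(d+2)$: the non-negative cone of $P_2(\hat K)$ is not a linear subspace, so standard inverse estimates on $P_2$ have to be sharpened using positivity. A natural path is to identify the extremiser as a squared non-negative linear function $v=(\sum_i a_i\lambda_i)^2$ with $a_i\ge 0$ and compute the ratio via Dirichlet's formula; securing this extremiser is the crux, and the rest of the argument is essentially bookkeeping with the Gamma-function identities.
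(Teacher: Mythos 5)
First, a point of reference: the paper does not prove this lemma at all --- it is quoted verbatim from Agouzal and Vassilevski \cite{AgouzalVassilevski} --- so there is no in-paper argument to compare yours against. Judged on its own terms, your proposal correctly settles the two cheap halves: Jensen's inequality gives the upper bound with $C_p=1$ for $0<p\le 1$, and H\"older's inequality gives the lower bound with constant $1$ for $p\ge 1$. Your reduction of the $0<p<1$ lower bound to the $q=1/p>1$ upper bound also does work out --- the interpolation $\|v\|_{L^1}\le\|v\|_{L^p}^{ps}\,\|v\|_{L^q}^{q(1-s)}$ with $s=(q-1)/(q-p)$ produces exactly the constant $C_{1/p}^{-1/p}$ --- although you should display that computation rather than describe it as ``rearranging''.

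The genuine gap is the $p>1$ upper bound with the stated constant, which is the actual content of the lemma, and which your write-up reduces to two unproven assertions: the Markov-type inequality $\|v\|_{L^\infty(K)}\le(d+1)(d+2)\,|K|^{-1}\|v\|_{L^1(K)}$ on the cone of non-negative quadratics, and an unspecified ``refined pointwise comparison of $v$ to a barycentric coordinate''. You flag the first as the crux and do not supply it; the compactness argument only yields the existence of \emph{some} finite constants, not these particular ones. Worse, the fallback you do describe --- combining $\|v\|_{L^p}^p\le\|v\|_{L^1}\|v\|_{L^\infty}^{p-1}$ with the Markov bound --- yields the constant $\bigl((d+1)(d+2)\bigr)^{1-1/p}$, which for $d=2$ is \emph{strictly larger} than the claimed $C_p=12\bigl(2/((p+1)(p+2))\bigr)^{1/p}$ as soon as $(p+1)(p+2)>24$, i.e.\ for all $p\gtrsim 3.4$. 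So even granting the Markov inequality, the chain of estimates you describe does not reach the stated $C_p$ for large $p$; all of the work is hidden in the missing pointwise comparison, whose very plausibility is unclear since a barycentric coordinate vanishes on a face while $v$ need not. (For this paper's purposes only the equivalence $\|v\|_{L^p(K)}\sim|K|^{1/p-1}\|v\|_{L^1(K)}$ with generic constants is used, so a weaker constant would suffice downstream --- but then the Markov inequality on the positive cone of $P_2(K)$ still needs a proof, and the lemma as stated remains unproved.)
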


\subsection{Estimates for interpolation errors in $L^p$ norm}
We consider the error of linear interpolation $e=u-u_I$ for a
quadratic function $u$ on $K$. Since the function $e$ is quadratic
on $K$, we can apply Lemma \ref{Cnormequi} to obtain
\begin{eqnarray}\label{relation1-p}
C_{1/p}^{-1/p} |K|^{\frac{1}{p}-1}\| e\|_{L^1(K)}\leq \|
e\|_{L^{p}(K)}\leq C_{p} |K|^{\frac{1}{p}-1}\|e\|_{L^1(K)}.
\end{eqnarray}
Set $p=2$,
\begin{eqnarray*}
|K|^{-\frac{1}{2}}\| e\|_{L^1(K)}\leq \| e\|_{L^{2}(K)}\leq C_{2}
|K|^{-\frac{1}{2}}\|e\|_{L^1(K)},
\end{eqnarray*}
or
\begin{eqnarray}\label{relation2-1}
C_2^{-1}|K|^{\frac{1}{2}}\| e\|_{L^2(K)}\leq \| e\|_{L^{1}(K)}\leq
|K|^{\frac{1}{2}}\|e\|_{L^2(K)}.
\end{eqnarray}
Combine (\ref{relation1-p}) and (\ref{relation2-1}), we get
\begin{eqnarray}\label{relation2-p}
C_{1/p}^{-1/p}C_2^{-1} |K|^{\frac{1}{p}-\frac{1}{2}}\|
e\|_{L^2(K)}\leq \| e\|_{L^{p}(K)}\leq C_{p}
|K|^{\frac{1}{p}-\frac{1}{2}}\|e\|_{L^2(K)}.
\end{eqnarray}
In this article, $A\sim B$ stands for that there exist two constants
$\underline{C}$ and $\overline{C}$ such that
\begin{eqnarray*}
\underline{C}A\leq B\leq \overline{C}A,
\end{eqnarray*}
where the two constants $\underline{C}$ and $\overline{C}$ may
depend on the prescribed error, the index $p$, the dimension $d$,
and the numbers of elements $N$, however are independent of function
at hand. So (\ref{relation2-p}) can be rewritten as
\begin{eqnarray*}
\|e\|_{L^{p}(K)}\sim |K|^{\frac{1}{p}-\frac{1}{2}}\|e\|_{L^2(K)}.
\end{eqnarray*}
Together with the expression (\ref{Nadler}) for the $L^2$ norm of
the linear interpolation error derived by Nadler\cite{Nadler}, we
have the a posteriori error estimate in $L^p$ norms as follows:
\begin{eqnarray}\label{error_estimate_Lp}
\|e\|^2_{L^{p}(K)}\sim |K|^{\frac{2}{p}-1}\|e\|^2_{L^2(K)}
&=&\frac{|K|^{\frac{2}{p}}}{180}{\Big
[}{\Big(}\sum_{i=1}^{3}d_i{\Big)}^2+d_1d_2+d_2d_3+d_1d_3{\Big]}.
\end{eqnarray}

\subsection{Estimates for gradient of interpolation errors in $L^p$ norm}
Now we consider the gradient of linear interpolation error $\nabla
e=\nabla(u-u_I)$ for a quadratic function $u$. Since the function
\begin{eqnarray*}
v_j(x)={\Big (}\frac{\partial e}{\partial x_j}{\Big )}^2
\end{eqnarray*}
is quadratic on $K$, we can apply Lemma \ref{Cnormequi} to obtain
\begin{eqnarray}\label{p-2left}
\|v_j\|_{L^{p/2}(K)}^{1/2}\geq C_{2/p}^{-1/p}
|K|^{\frac{1}{p}-\frac{1}{2}} \|v_j\|_{L^1(K)}^{1/2}=C_{2/p}^{-1/p}
|K|^{\frac{1}{p}-\frac{1}{2}} {\Big \|}\frac{\partial e}{\partial
x_j}{\Big \|}_{L^2(K)},
\end{eqnarray}
and
\begin{eqnarray}\label{p-2right}
\|v_j\|_{L^{p/2}(K)}^{1/2}\leq C_{p/2}^{1/2}
|K|^{\frac{1}{p}-\frac{1}{2}} \|v_j\|_{L^1(K)}^{1/2}=C_{p/2}^{1/2}
|K|^{\frac{1}{p}-\frac{1}{2}} {\Big \|}\frac{\partial e}{\partial
x_j}{\Big \|}_{L^2(K)}.
\end{eqnarray}
Since
\begin{eqnarray*}
\|\nabla e\|_{L^{p}(K)}^{p}=\sum\limits_{j=1}^{d}\int_{K}{\Big
|}\frac{\partial e}{\partial x_j}{\Big |}^pd{\bf
x}=\sum\limits_{j=1}^{d}\|v_j\|_{L^{p/2}(K)}^{p/2},
\end{eqnarray*}
then together with (\ref{p-2left}) and (\ref{p-2right}), we have
\begin{eqnarray*}
C_{2/p}^{-1/p} |K|^{\frac{1}{p}-\frac{1}{2}}{\Big
(}\sum\limits_{j=1}^{d}{\Big \|}\frac{\partial e}{\partial x_j}{\Big
\|}_{L^2(K)}^p{\Big )}^{\frac{1}{p}}\leq \|\nabla e\|_{L^{p}(K)}\leq
C_{p/2}^{1/2} |K|^{\frac{1}{p}-\frac{1}{2}} {\Big
(}\sum\limits_{j=1}^{d}{\Big \|}\frac{\partial e}{\partial x_j}{\Big
\|}_{L^2(K)}^p{\Big )}^{\frac{1}{p}}.
\end{eqnarray*}
From (\ref{Dnorm2-p}), the inequality
\begin{eqnarray*}
\underline{C}_pC_{2/p}^{-1/p} |K|^{\frac{1}{p}-\frac{1}{2}}\|\nabla
e\|_{L^2(K)}\leq \|\nabla e\|_{L^{p}(K)}\leq
\overline{C}_pC_{p/2}^{1/2} |K|^{\frac{1}{p}-\frac{1}{2}}\|\nabla
e\|_{L^2(K)},
\end{eqnarray*}
holds, or simply
\begin{eqnarray*}
\|\nabla e\|_{L^{p}(K)}\sim |K|^{\frac{1}{p}-\frac{1}{2}}\|\nabla
e\|_{L^2(K)}.
\end{eqnarray*}
Together with the a posteriori error estimate
(\ref{error_estimate_H1}) of the interpolation error in
$H^1$($=W^{1,2}$) norm, we have the a posteriori error estimate in
$W^{1,p}$ norms as follows:
\begin{eqnarray}\label{error_estimate_W1p}
\|\nabla e\|^2_{L^p(K)}&\sim& |K|^{\frac{2}{p}-1}\|\nabla
e\|_{L^2(K)}^2\nonumber\\
&=&|K|^{\frac{2}{p}-1}\frac{1}{48|K|}\sum_{i=1}^{3}({\bf
\ell}_{i+1}\cdot H_K{\bf \ell}_{i+2})^2|{\bf
\ell}_i|^2\nonumber\\
&=&\frac{|K|^{\frac{2}{p}-2}}{48}\sum_{i=1}^{3}({\bf
\ell}_{i+1}\cdot H_K{\bf \ell}_{i+2})^2|{\bf \ell}_i|^2.
\end{eqnarray}

\section{Metric tensors for anisotropic mesh adaptation}
We now use the results of Section \ref{estimate_section} to develop
metric tensors for interpolation errors and their gradients in $L^p$
norm in a unified way. As a common practice in anisotropic mesh
generation, the metric tensor, $\mathcal{M}({\bf x})$, is used in a
meshing strategy in such a way that an anisotropic mesh is generated
as a quasi-uniform mesh in the metric space determined by
$\mathcal{M}({\bf x})$. Mathematically, this can be interpreted as
the shape, size and equidistribution requirements as follows.

{\bf The shape requirement.} The elements of the new mesh,
$\mathcal{T}_h$, are (or are close to being) equilateral in the
metric.

{\bf The size requirement.} The elements of the new mesh
$\mathcal{T}_h$ have a unitary volume in the metric, i.e.,
\begin{eqnarray}\label{3.1}
\int_K\sqrt{\det(\mathcal{M}({\bf x}))}d{\bf x}=1,\quad\forall K\in
\mathcal{T}_h.
\end{eqnarray}

{\bf The equidistribution requirement. } The anisotropic mesh is
required to minimize the error for a given number of mesh points (or
equidistribute the error on every element).

Notice that to derive the monitor function, we just need the shape
and equidistribution requirements.
\subsection{Metric tensors for gradients of interpolation errors in $L^p$ norm}
\begin{figure}[ht]
  \centering
  \includegraphics[width=12cm]{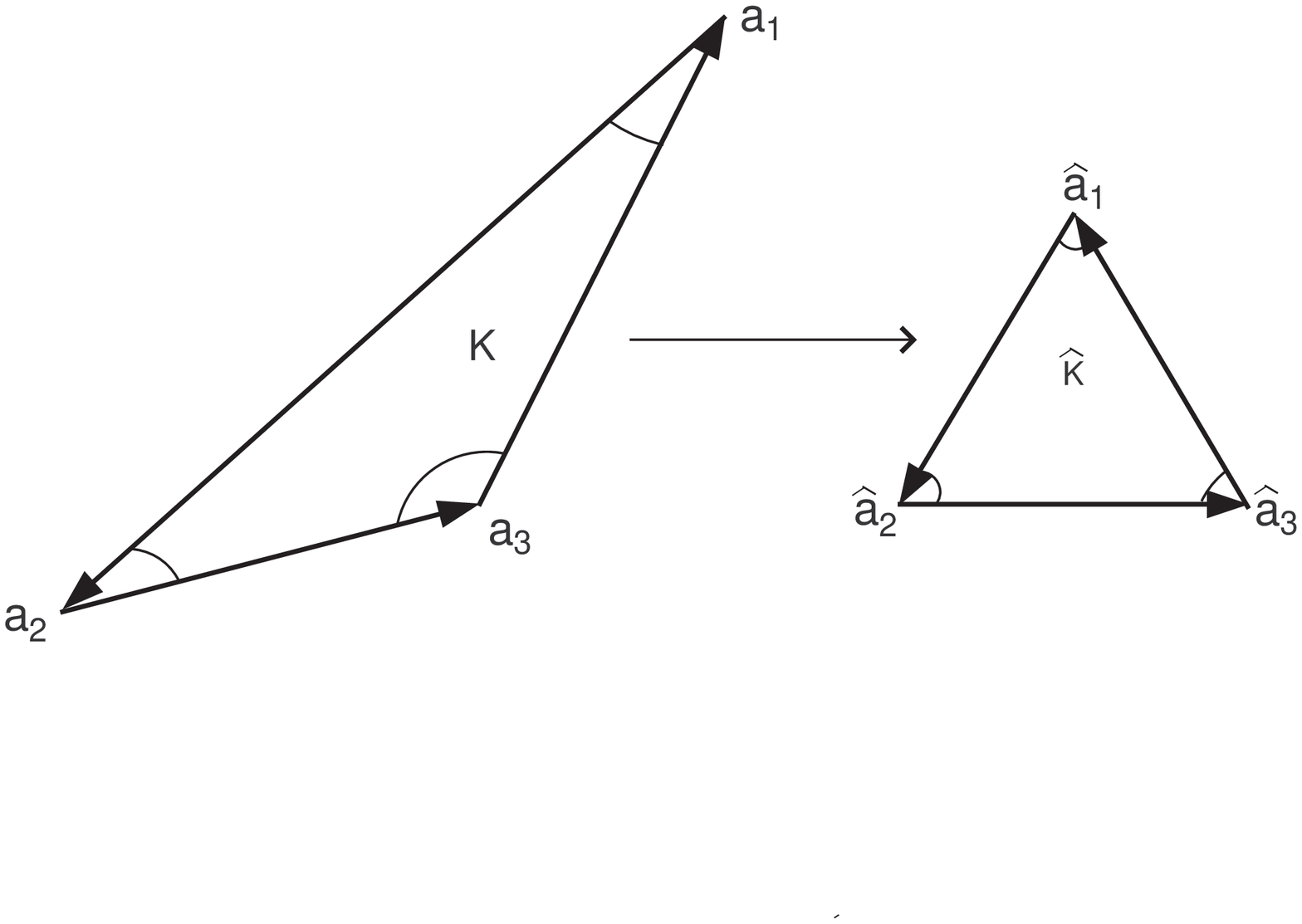}
  \put(-150,181){\large$\mathcal{F}_K$}
  \put(-270,107){\Large$\bf  \ell_1$}
      \put(-178,198){\Large$\bf  \ell_2$}
       \put(-256,183){\Large$\bf  \ell_3$}
  \put(-190,218){\mbox{$\theta_1$}} \put(-292,128){\mbox{$\theta_2$}}
  \put(-240,150){\mbox{$\theta_3$}}
    \put(-73,187){\mbox{$\hat{\theta}_1$}} \put(-100,143){\mbox{$\hat{\theta}_2$}}
  \put(-45,143){\mbox{$\hat{\theta}_3$}}
   \put(-73,123){\mbox{$\hat{\ell}_1$}} \put(-40,165){\mbox{$\hat{\ell}_2$}}
  \put(-98,180){\mbox{$\hat{\ell}_3$}}\vspace{-3.7cm}
  \caption{Affine map ${\bf \hat{x}}=\mathcal{F}_K{\bf x}$ from $K$ to the
reference triangle $\hat{K}$.}\label{Affine_map}
  \end{figure}

\noindent We derive the monitor function $M({\bf x})$ first. Assume
$H(u)$ be a symmetric positive definite matrix on every point ${\bf
x}$, this assumption will be dropped later. Set $M({\bf x})=C({\bf
x})H(u)$. Consider the $L^2$ projection of $H(u)$ on $K$, denoted by
$H_K$, then so does $M_{K}$. Since $H_K$ is a symmetric positive
definite matrix, we consider the singular value decomposition
$H_K=R^T\Lambda R$, where $\Lambda=\mbox{diag}(\lambda_1,\lambda_2)$
is the diagonal matrix of the corresponding eigenvalues
($\lambda_1,\lambda_2>0$) and $R$ is the orthogonal matrix having as
rows the eigenvectors of $H_K$. Denote by $F_K$ and ${\bf t}_K$ the
matrix and the vector defining the invertible affine map $\hat{\bf
x}=\mathcal{F}_K({\bf x})=F_K{\bf x} + {\bf t}_K$ from the generic
element $K$ to the reference triangle $\hat{K}$ (see Figure
\ref{Affine_map}).

Obviously, $M_{K}=C_KH_K$. Let $M_{K}=F_K^TF_K$, then
$F_K=C_K^{\frac{1}{2}}\Lambda^{\frac{1}{2}} R$. Mathematically, the
shape requirement can be expressed as
\begin{eqnarray}\label{shape}
|\hat{\ell}_i|=L\,\,\mbox{and}\,\,
\cos\hat{\theta}_i=\frac{\hat{\ell}_{i+1}\cdot\hat{\ell}_{i+2}}{L^2}=\frac{1}{2},\,i=1,2,3,
\end{eqnarray}
where $L$ is a constant for every element $K$. Enforcing the shape
requirement, we get
\begin{eqnarray*}
\|\nabla e\|^2_{L^p(K)}&\sim&
\frac{|K|^{\frac{2}{p}-2}}{48}\sum_{i=1}^{3}({\bf \ell}_{i+1}\cdot
H_K{\bf \ell}_{i+2})^2|{\bf
\ell}_i|^2\nonumber\\
&=&\frac{|K|^{\frac{2}{p}-2}}{48C_K^2}\sum_{i=1}^{3}({\bf
\ell}_{i+1}\cdot M_K{\bf \ell}_{i+2})^2|{\bf
\ell}_i|^2\nonumber\\
&=&\frac{L^4|K|^{\frac{2}{p}-2}}{48C_K^2}\sum_{i=1}^{3}(\cos\hat{\theta}_i)^2|{\bf
\ell}_i|^2=\frac{L^4|K|^{\frac{2}{p}-2}}{192C_K^2}\sum_{i=1}^{3}|{\bf
\ell}_i|^2.
\end{eqnarray*}
Notice that,
\begin{eqnarray*}
|K|=\frac{|\hat{K}|}{C_K\sqrt{\det(H_K)}},
\end{eqnarray*}
we have
\begin{eqnarray*}
\|\nabla e\|^2_{L^p(K)}&\sim&\frac{L^4|\hat{K}|^{\frac{2}{p}-2}C_K^{2-\frac{2}{p}}\det(H_K)^{1-\frac{1}{p}}}{192C_K^2}\sum_{i=1}^{3}{\Big|}C_K^{-\frac{1}{2}}R^{-1}\Lambda^{-\frac{1}{2}}\hat{\ell}_i{\Big|}^2\nonumber\\
&=&\frac{L^4|\hat{K}|^{\frac{2}{p}-2}\det(H_K)^{1-\frac{1}{p}}}{192C_K^{1+\frac{2}{p}}}\sum_{i=1}^{3}{\Big|}\Lambda^{-\frac{1}{2}}\hat{\ell}_i{\Big|}^2\nonumber\\
&=&\frac{L^4|\hat{K}|^{\frac{2}{p}-2}\det(H_K)^{1-\frac{1}{p}}}{192C_K^{1+\frac{2}{p}}}\frac{\mbox{tr}(H_K)}{\det(H_K)}\\
&=&\frac{L^4|\hat{K}|^{\frac{2}{p}-2}\det(H_K)^{-\frac{1}{p}}\mbox{tr}(H_K)}{192C_K^{1+\frac{2}{p}}}\\
&\sim&\frac{\det(H_K)^{-\frac{1}{p}}\mbox{tr}(H_K)}{C_K^{1+\frac{2}{p}}},
\end{eqnarray*}
then
\begin{eqnarray*}
\|\nabla e\|^p_{L^p(K)}=(\|\nabla e\|^2_{L^p(K)})^{p/2}\sim
\frac{\det(H_K)^{-\frac{1}{2}}\mbox{tr}(H_K)^{\frac{p}{2}}}{C_K^{1+\frac{p}{2}}}.
\end{eqnarray*}
To satisfy the equidistribution requirement, let
\begin{eqnarray*}
\|\nabla e\|^p_{L^p(K)}={\Big (}\sum\limits_{K\in
\mathcal{T}_h}e_K^p{\Big )}/N=\epsilon^p/N,
\end{eqnarray*}
where $N$ is the number of elements of $\mathcal{T}_h$. Then
\begin{eqnarray*}
C_K\sim \det(H_K)^{-\frac{1}{p+2}}\mbox{tr}(H_K)^{\frac{p}{p+2}}.
\end{eqnarray*}
So $M({\bf x})$ could be the form
\begin{eqnarray*}
M({\bf x})=\det(H)^{-\frac{1}{p+2}}\mbox{tr}(H)^{\frac{p}{p+2}}H(u),
\end{eqnarray*}
since $M({\bf x})$ can be modified by multiplying a constant. Since
it corresponds the gradient of interpolation errors in $L^p$ norm,
we denote it by $M_{1,p}^{n}({\bf x})$.

To establish the metric tensor $\mathcal{M}_{1,p}^{n}({\bf x})$, set
$\mathcal{M}_{1,p}^{n}({\bf x})=\theta_{1,p}M_{1,p}^{n}({\bf x})$,
at this time, the size requirement (\ref{3.1}) should be used, which
leads to
\begin{eqnarray*}
\theta_{1,p}\int_K\rho_{1,p}({\bf x}) d{\bf x}=1,
\end{eqnarray*}
where
\begin{eqnarray*}
\rho_{1,p}({\bf x})=\sqrt{\det(M_{1,p}^n({\bf x}))}.
\end{eqnarray*}
Summing the above equation over all the elements of $\mathcal{T}_h$,
one gets
\begin{eqnarray*}
\theta_{1,p}\sigma_{1,p}=N,
\end{eqnarray*}
where
\begin{eqnarray*}
\sigma_{1,p}=\int_{\Omega}\rho_{1,p}({\bf x}) d{\bf x}.
\end{eqnarray*}
Thus, we get
\begin{eqnarray*}
\theta_{1,p}=\frac{N}{\sigma_{1,p}},
\end{eqnarray*}
and as a consequence,
\begin{eqnarray*}
\mathcal{M}_{1,p}^{n}({\bf
x})=\frac{N}{\sigma_{1,p}}\det(H)^{-\frac{1}{p+2}}\mbox{tr}(H)^{\frac{p}{p+2}}H(u).
\end{eqnarray*}

\subsection{Metric tensor for the interpolation errors in $L^p$ norm}
Using the error estimates (\ref{error_estimate_Lp}) for
interpolation errors in $L^p$ norm and the shape requirement
(\ref{shape}), we have
\begin{eqnarray*}
\|e\|^2_{L^{p}(K)}&\sim& \frac{|K|^{\frac{2}{p}}}{180}{\Big [}{\Big(}\sum_{i=1}^{3}d_i{\Big)}^2+d_1d_2+d_2d_3+d_1d_3{\Big]}\nonumber\\
&=&\frac{|K|^{\frac{2}{p}}}{180C_K^2}{\Big[}{\Big(}\sum_{i=1}^{3}|\hat{\ell}_i|^2{\Big)}^2+\sum_{i=1}^{3}{\Big(}|\hat{\ell}_{i+1}||\hat{\ell}_{i+2}|{\Big)}^2{\Big]}\nonumber\\
&=&\frac{L^4|K|^{\frac{2}{p}}}{15C_K^2}=\frac{L^4|\hat{K}|^{\frac{2}{p}}}{15C_K^{2+\frac{2}{p}}\det(H)^{\frac{1}{p}}}
\sim \frac{1}{C_K^{2+\frac{2}{p}}\det(H)^{\frac{1}{p}}}.
\end{eqnarray*}
Then,
\begin{eqnarray*}
\|e\|^p_{L^{p}(K)}=(\|e\|^2_{L^{p}(K)})^{p/2}\sim
\frac{1}{C_K^{p+1}\det(H)^{\frac{1}{2}}}.
\end{eqnarray*}
To satisfy the equidistribution requirement, let
\begin{eqnarray*}
\|e\|^p_{L^p(K)}={\Big (}\sum\limits_{K\in \mathcal{T}_h}e_K^p{\Big
)}/N=\epsilon^p/N.
\end{eqnarray*}
Using similar argument in last subsection, we easily get monitor
functions
\begin{eqnarray*}
M_{0,p}^{n}({\bf x})=\det(H)^{-\frac{1}{2(p+1)}}H(u),
\end{eqnarray*}
and metric tensors
\begin{eqnarray*}
\mathcal{M}_{0,p}^{n}({\bf x})=\frac{N}{\sigma_{0,p}}\det
(H)^{-\frac{1}{2(p+1)}}H(u),
\end{eqnarray*}
for the interpolation errors in $L^p$ norm.
\subsection{Practice use of metric tensor}
So far we assume that $H(u)$ is a symmetric positive definite matrix
at every point. However this assumption doesn't hold in many cases.
In order to obtain a symmetric positive definite matrix, the
following procedure are often implemented. First, the Hessian $H$ is
modified into $|H|=R^T\,\,\mbox{diag}(|\lambda_1|,|\lambda_2|)R$ by
taking the absolute value of its eigenvalues
(\cite{HabForDomValBou}). Since $|H|$ is only semi-positive
definite, $\mathcal{M}_{m,p}^{n}$ cannot be directly applied to
generate the anisotropic meshes. To avoid this difficulty, we
regularize the expression with the flooring parameter
$\alpha_{m,p}>0$ (see, e.g., \cite{HuangSiam}). Replacing $|H|$ with
\begin{eqnarray*}
\mathcal{H}=\alpha_{m,p}I+|H|,
\end{eqnarray*}
we get the modified metric tensors, also denoted by
$\mathcal{M}_{m,p}^{n}$, that is
\begin{eqnarray}\label{Oursmetric}
\mathcal{M}_{m,p}^{n}({\bf
x})=\frac{N}{\sigma_{m,p}}\det(\mathcal{H}
)^{-\frac{1}{2+p(2-m)}}\mbox{tr}(\mathcal{H}
)^{\frac{mp}{2+p(2-m)}}\mathcal{H},
\end{eqnarray}
which are suitable for practical mesh generation.
\subsection{Comparison with existing methods using monitor function style}
When $m=0$, the new monitor function $M_{0,p}^{n}$ (\ref{OursLp}) is
in fact the same with (\ref{HRlp}) in \cite{HuangRussell,Huang}.
Chen, Sun and Xu \cite{ChenSunXu} proved that under suitable
conditions, the error estimate
\begin{eqnarray*}
\|u-u_I\|_{L^p(\Omega)}\leq CN^{-\frac{2}{d}}\|\sqrt[d]{\det
H}\|_{L^{\frac{pd}{2p+d}}(\Omega)}, 1\leq p\leq \infty,
\end{eqnarray*}
holds on the quasi-uniform mesh determined by the metric $(\det
{H})^{-\frac{1}{2p+d}}H$, where $H$ is a majorant of the Hessian
matrix, $N$ is the number of elements in the triangulation and the
constant $C$ does not depend on $u$ and $N$. This estimate is
optimal in the sense that it is a lower bound if $u$ is strictly
convex or concave. Note that $\mathcal{H}$ can be chosen as a
majorant of the Hessian matrix.

When $m=1$, the new monitor function $M_{1,p}^{n}$ (\ref{Oursw1p})
is different with (\ref{HRw1p}) \cite{HuangRussell} that the former
refers to $\mbox{tr}(\mathcal{H})$ and the latter involves
$\|\mathcal{H}\|$. In some cases, the two monitor functions are
pretty much alike. However, in other cases, the effect of the former
is superior to the latter for mesh generation. Numerical results in
\cite{XieYin} have shown our approach's superiority for the error in
$H^{1}$ norm.

\begin{figure}
\hspace{-1cm}
\subfigure[]{\includegraphics[width=8cm]{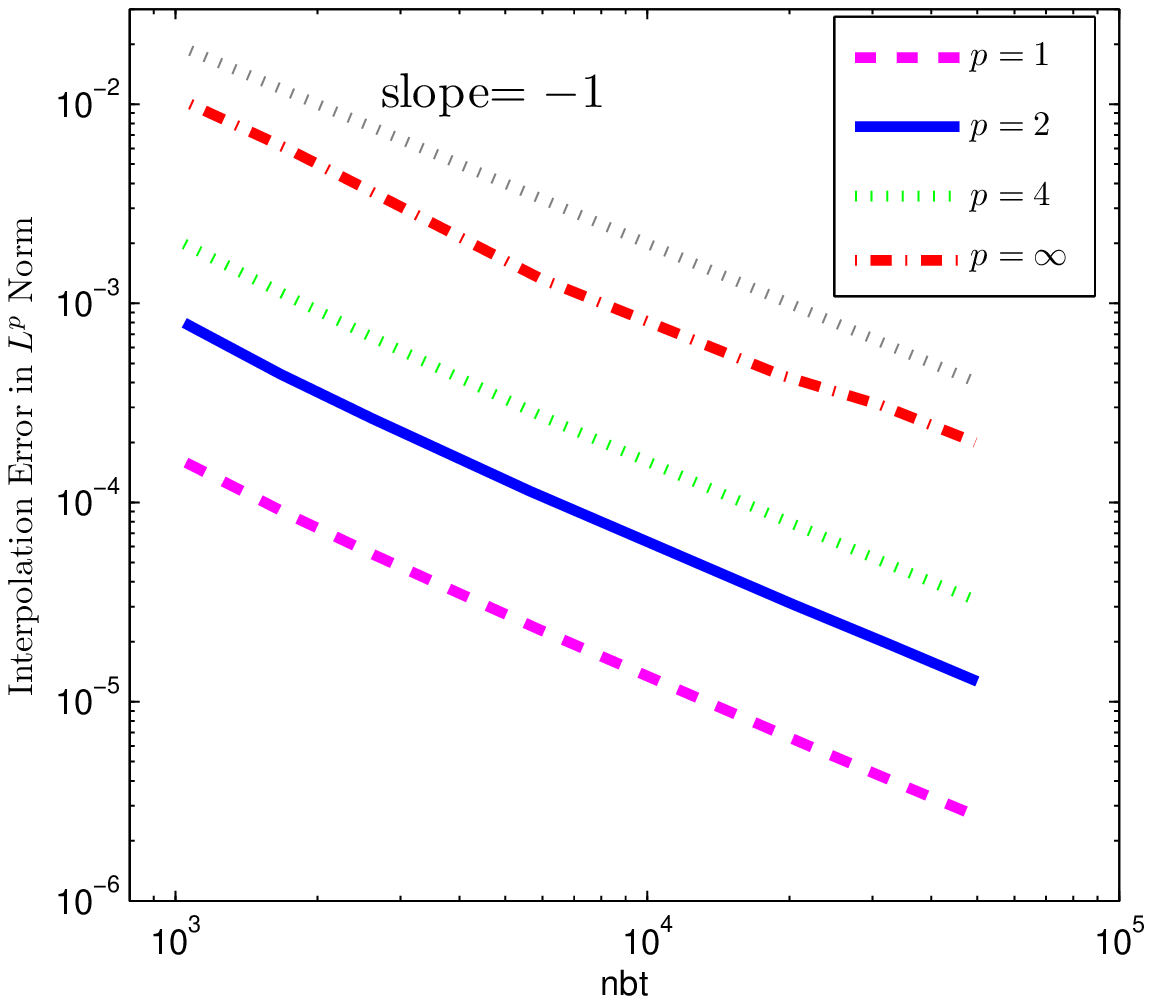}}
\hspace{-1cm}
\subfigure[]{\includegraphics[width=8cm]{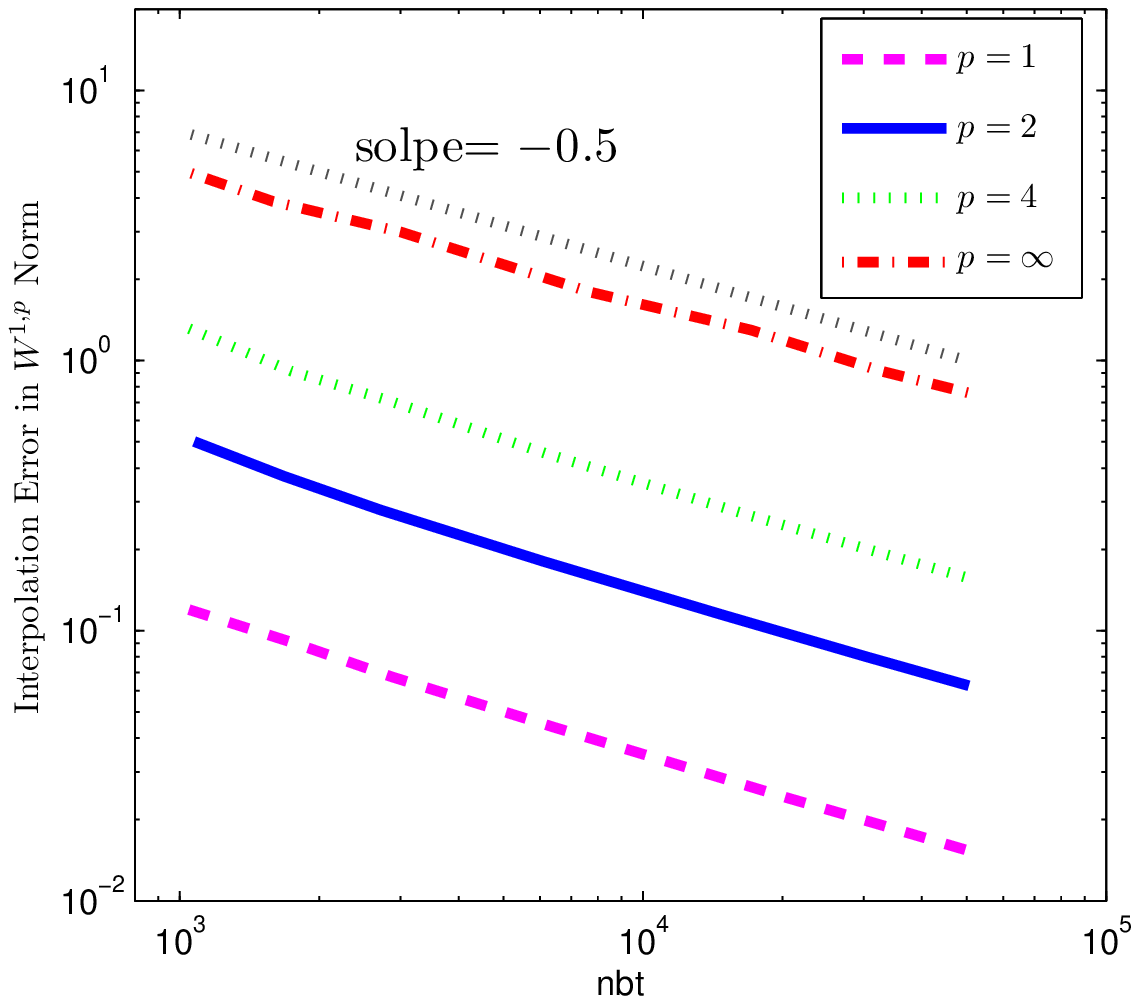}}
\caption{Example 1: Interpolation error and its gradient in $L^p$
norm}\label{Fig_Interperror_Circle}
\end{figure}
\section{Numerical experiments}
In this section, we present some numerical results for three
problems with given analytical solutions. The numerical results are
performed by using the BAMG software \cite{Hecht}. Given a
¡°background¡± mesh and an approximation solution, BAMG generates
the mesh according to the metric tensor. The code allows the user to
supply his/her own metric tensor defined on a background mesh. In
our computation, the background mesh has been taken as the most
recent mesh available.

Denote by $nbt$ the number of triangles in the current mesh. The
number of triangles is adjusted when necessary by trial and errors
through the modification of the multiplicative coefficient of the
metric tensors.

\begin{figure}
\subfigure[]{\includegraphics[width=8cm]{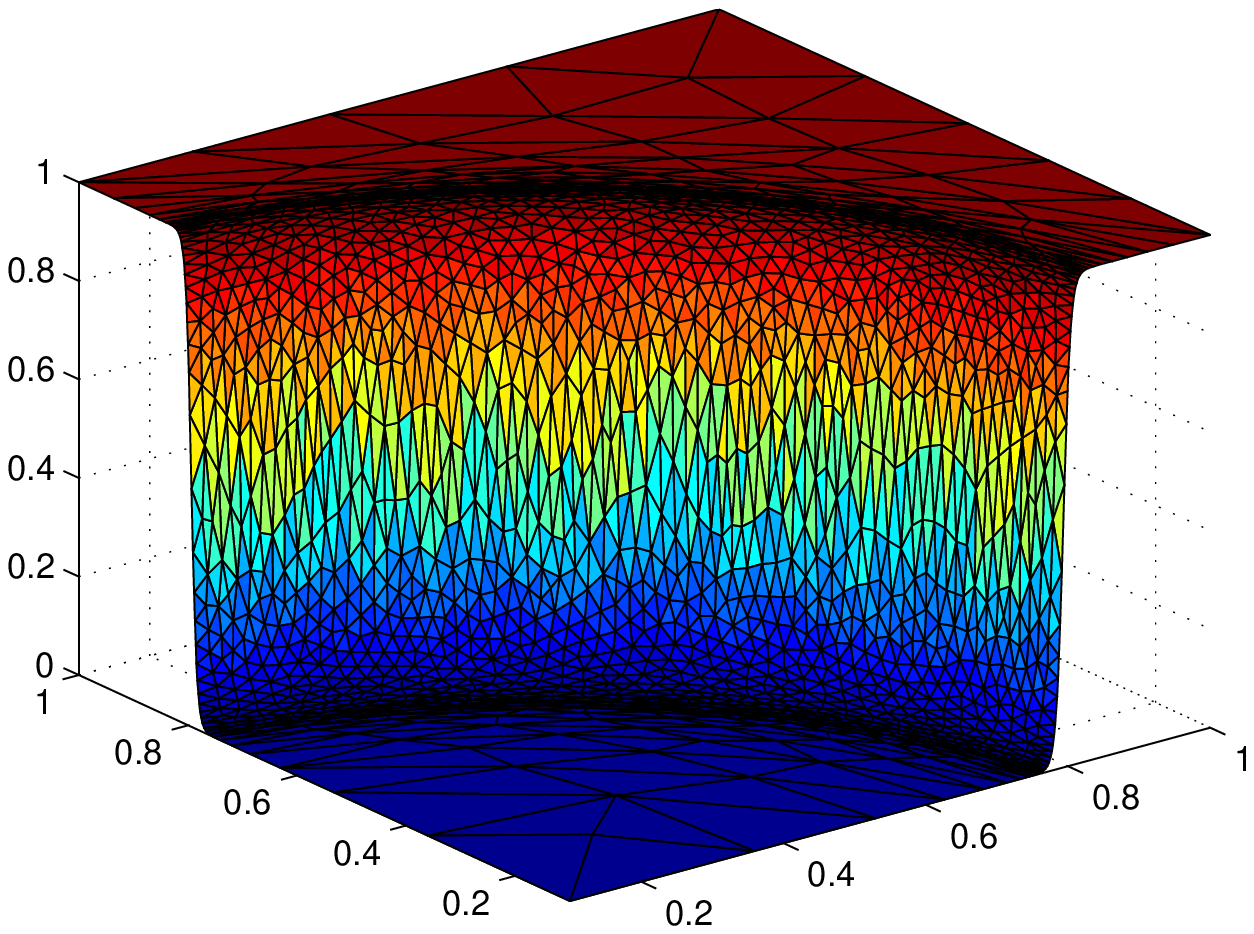}}
\subfigure[]{\includegraphics[width=8cm]{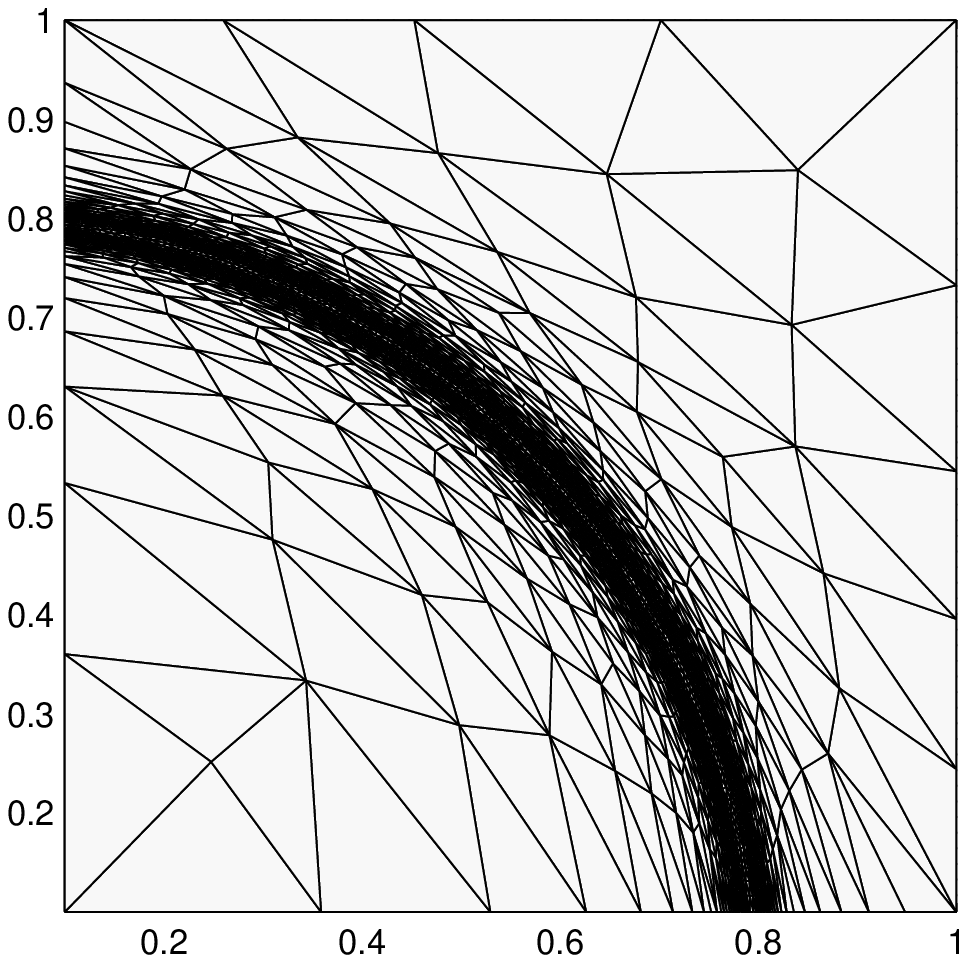}}
\caption{Example 1: plots of the solution (a) and corresponding mesh
(b) using $\mathcal{M}_{1,1}$}\label{Fig_Mesh_Solution_Circle}
\end{figure}

 \noindent{\bf Example 1} This example is to generate adaptive
 meshes for
\begin{eqnarray}
u({\bf x})=\frac{1}{1+e^{-200(\sqrt{x_1^2+x_2^2}-0.8)}},\quad {\bf
x}\in (0.1,1)\times(0.1,1).
\end{eqnarray}
This function is anisotropic along the quarter circle
$x_1^2+x_2^2=0.8^2$ and changes sharply in the direction normal to
this curve. A similar example was presented in \cite{Nguyen} where
the region is $(0,1)\times(0,1)$. In the current computation, each
run is stopped after 15 iterations to guarantee that the adaptive
procedure tends towards stability. We show in Figure
\ref{Fig_Interperror_Circle} the $L^p$ norms of the interpolation
error and its gradient using corresponding metric tensors, for
$p=1,2,4,\infty$. For example, the curve $p=2$ in (a) stands for the
interpolation error using the metric tensor $\mathcal{M}_{0,2}$,
while $p=\infty$ in (b) stands for the gradient interpolation error
using the metric tensor $\mathcal{M}_{1,\infty}$. We see that the
convergence rates for the interpolation error and its gradient are
always nearly optimal, i.e. $\|e\|_{L^p}\sim N^{-1}$ and $\|\nabla
e\|_{L^p}\sim N^{-0.5}$. We also show in Figure
\ref{Fig_Mesh_Solution_Circle} plots of the solution and
corresponding mesh using the metric tensor $\mathcal{M}_{1,1}$.

\begin{figure}
\hspace{-1cm}
\subfigure[]{\includegraphics[width=8cm]{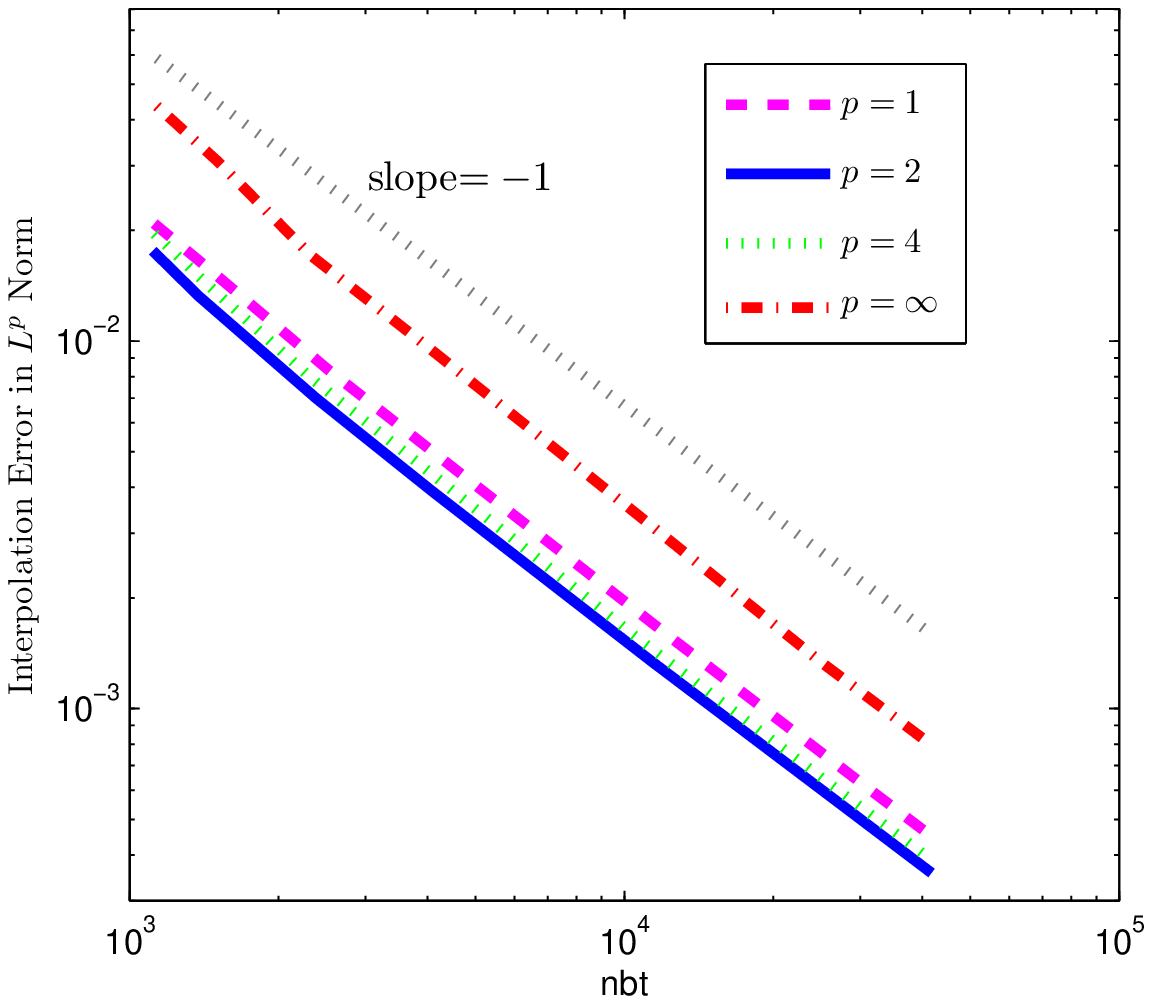}}
\hspace{-1cm}
\subfigure[]{\includegraphics[width=8cm]{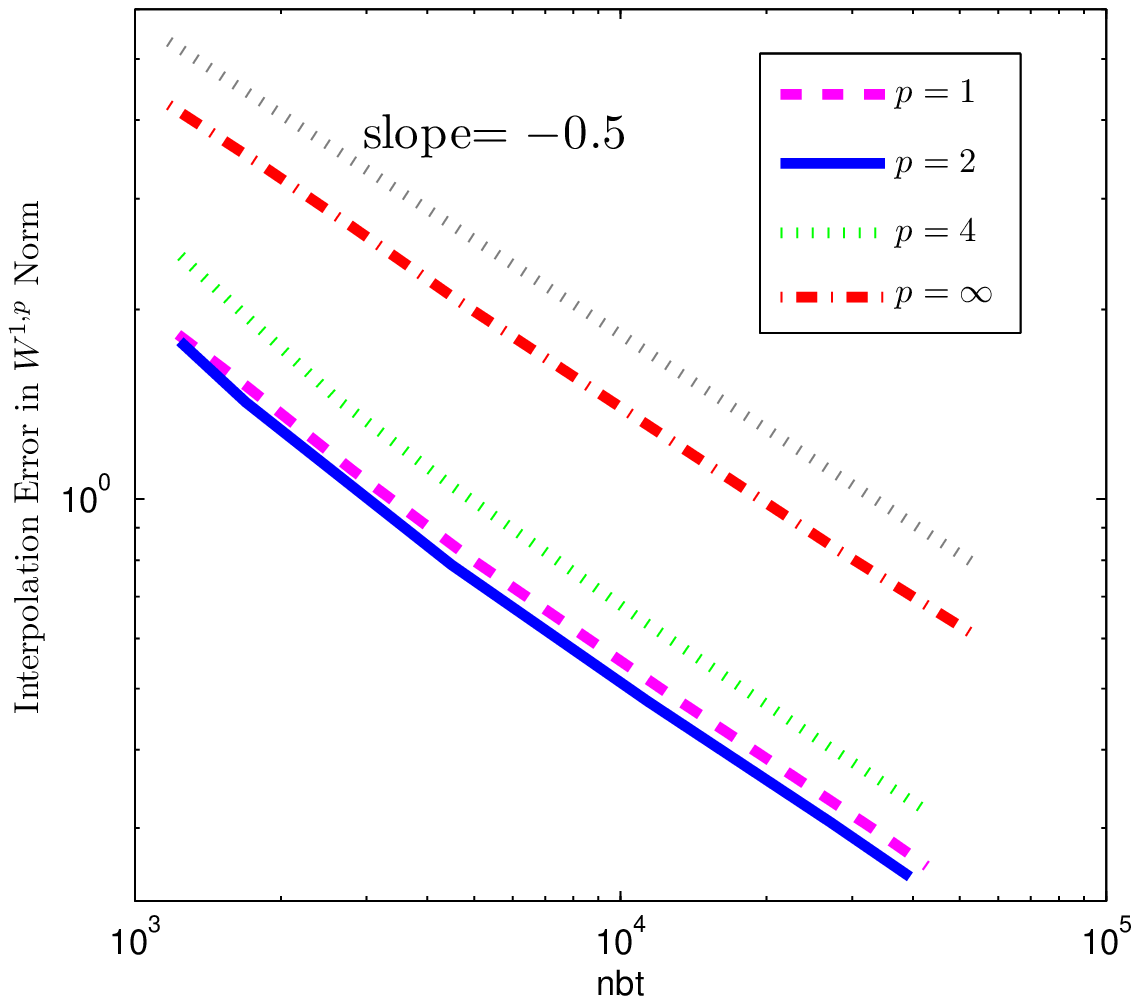}}
\caption{Example 2: Interpolation error and its gradient in $L^p$
norm}\label{Fig_Interp_error_ZigZag}
\end{figure}

 \noindent{\bf Example 2} This example is to generate adaptive
 meshes for
\begin{eqnarray}
u({\bf x})=x_1^2x_2+x_2^3+\tanh(10(\sin(5x_2)-2x_1)),\quad {\bf
x}\in (-1,1)\times(-1,1).
\end{eqnarray}
This function is anisotropic along the zigzag curve
$\sin(5x_2)-2x_1=0$ and changes sharply in the direction normal to
this curve (taken from \cite{AgouzalVassilevski}). In the current
computation, each run is stopped after 20 iterations to guarantee
that the adaptive procedure tends towards stability. We show in
Figure \ref{Fig_Interp_error_ZigZag} the $L^p$ norms of the
interpolation error and its gradient using corresponding metric
tensors, for $p=1,2,4,\infty$. As in Example 1, the convergence
rates for the interpolation error and its gradient here are always
nearly optimal. In Figure \ref{Fig_meshes_ZigZag} we select 6 meshes
with 4000 triangles generated by corresponding metric tensors. We
can learn that the optimal meshes in different norms are different.
For example, the mesh generated by the metric tensor
$\mathcal{M}_{1,\infty}$ concentrates more triangles and nodes along
the zigzag line.

\begin{figure}
\vspace{-1cm} \hspace{-1.1cm}
\subfigure[]{\includegraphics[width=6cm]{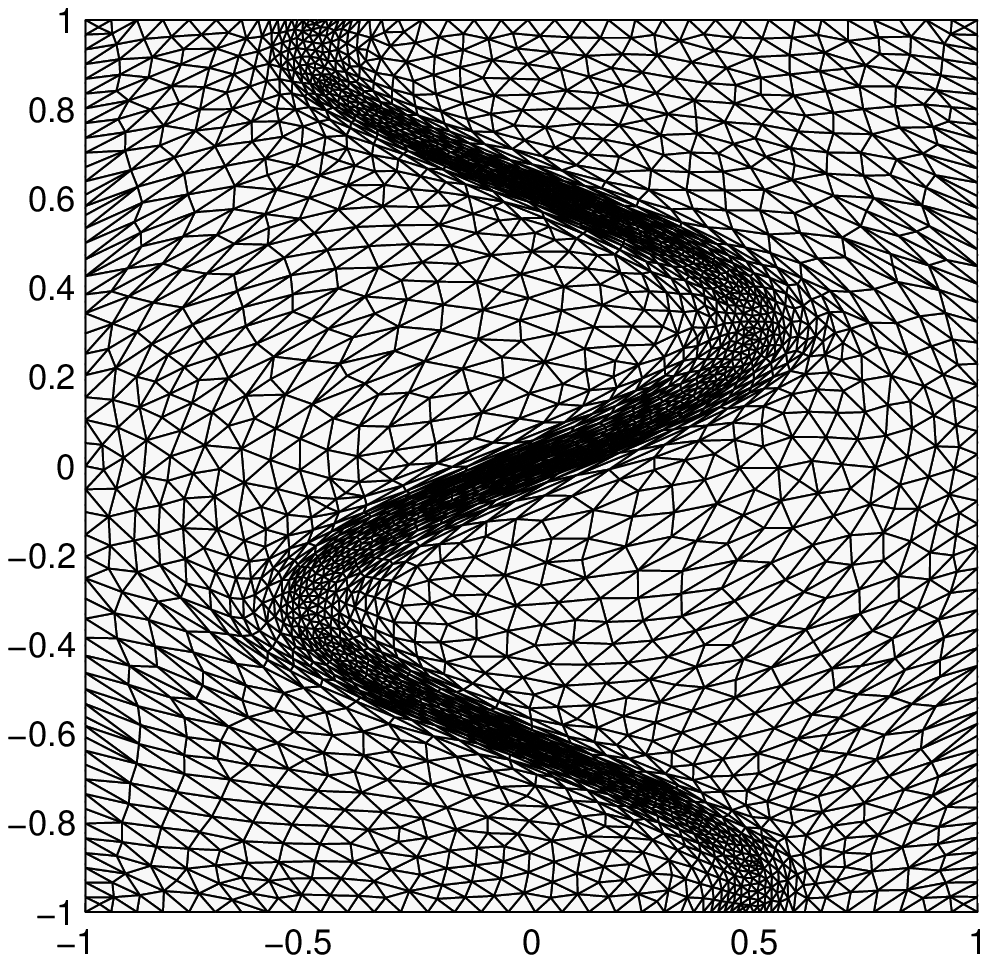}}
\hspace{-1.2cm}
\subfigure[]{\includegraphics[width=6cm]{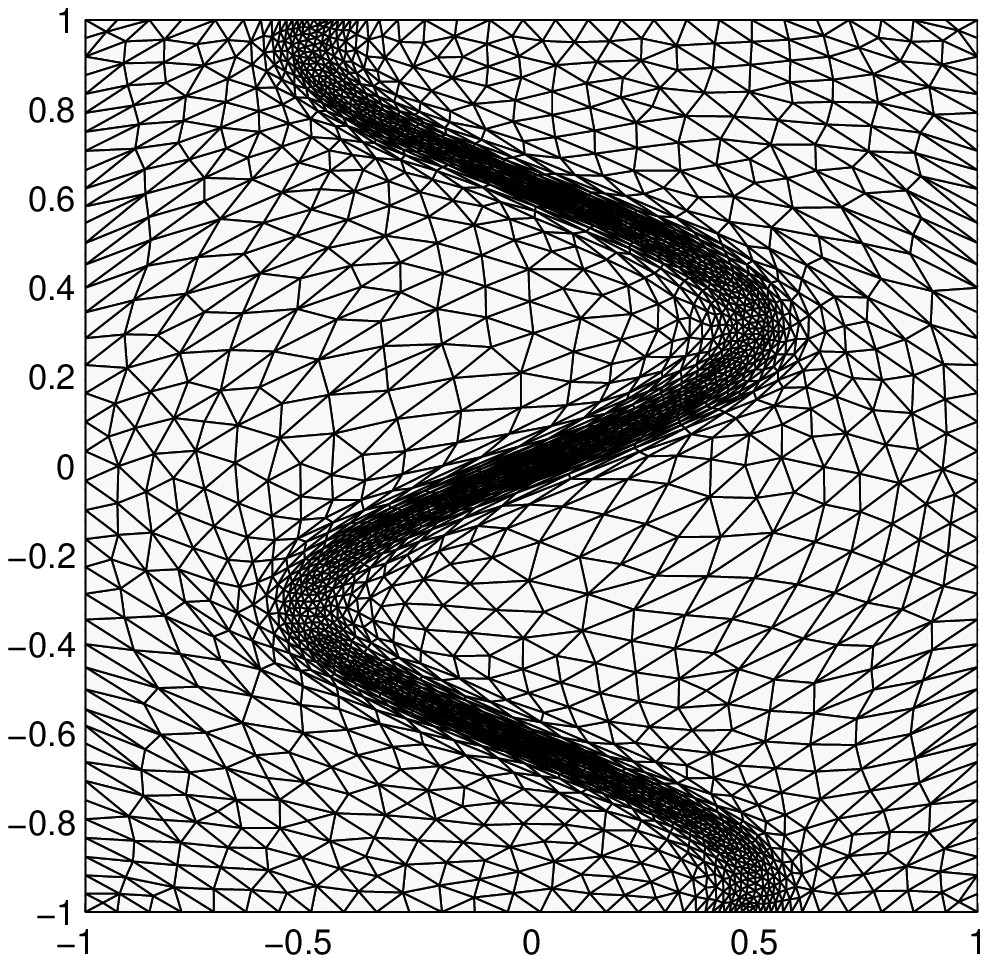}}
\hspace{-1.2cm}
\subfigure[]{\includegraphics[width=6cm]{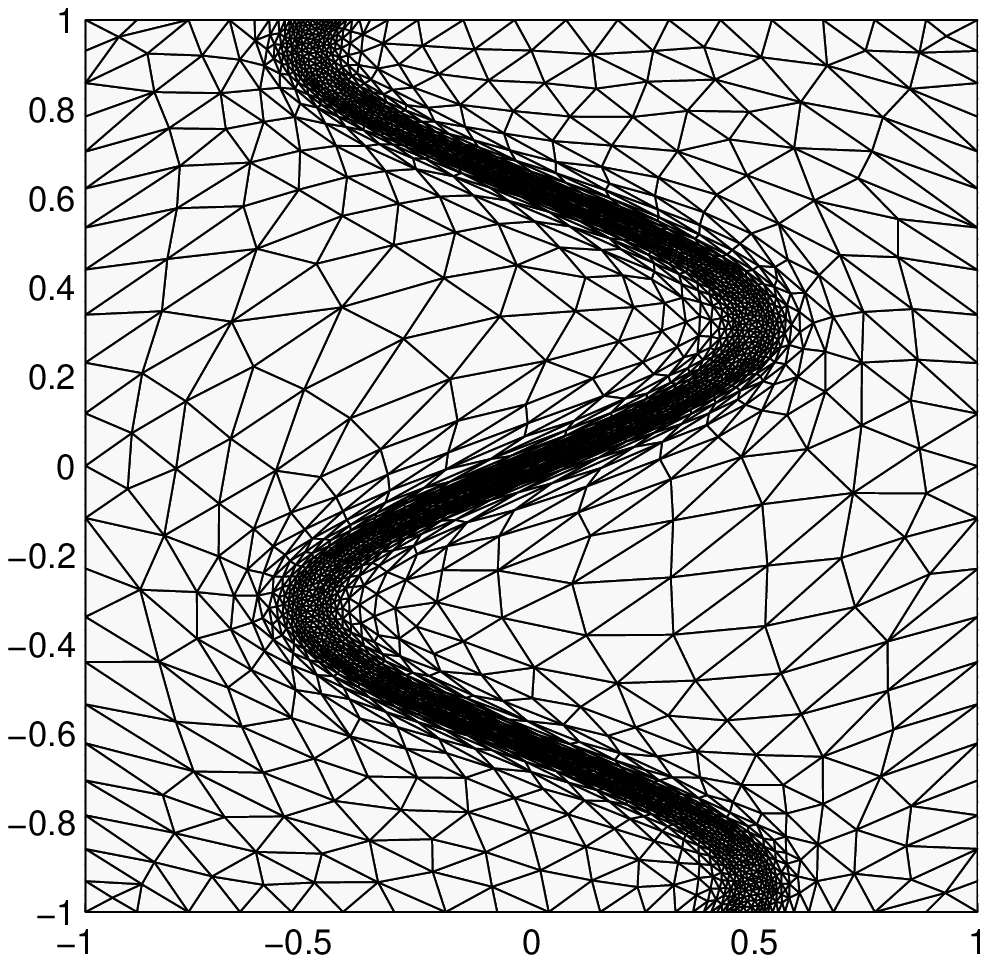}}

\hspace{-1.1cm}
\subfigure[]{\includegraphics[width=6cm]{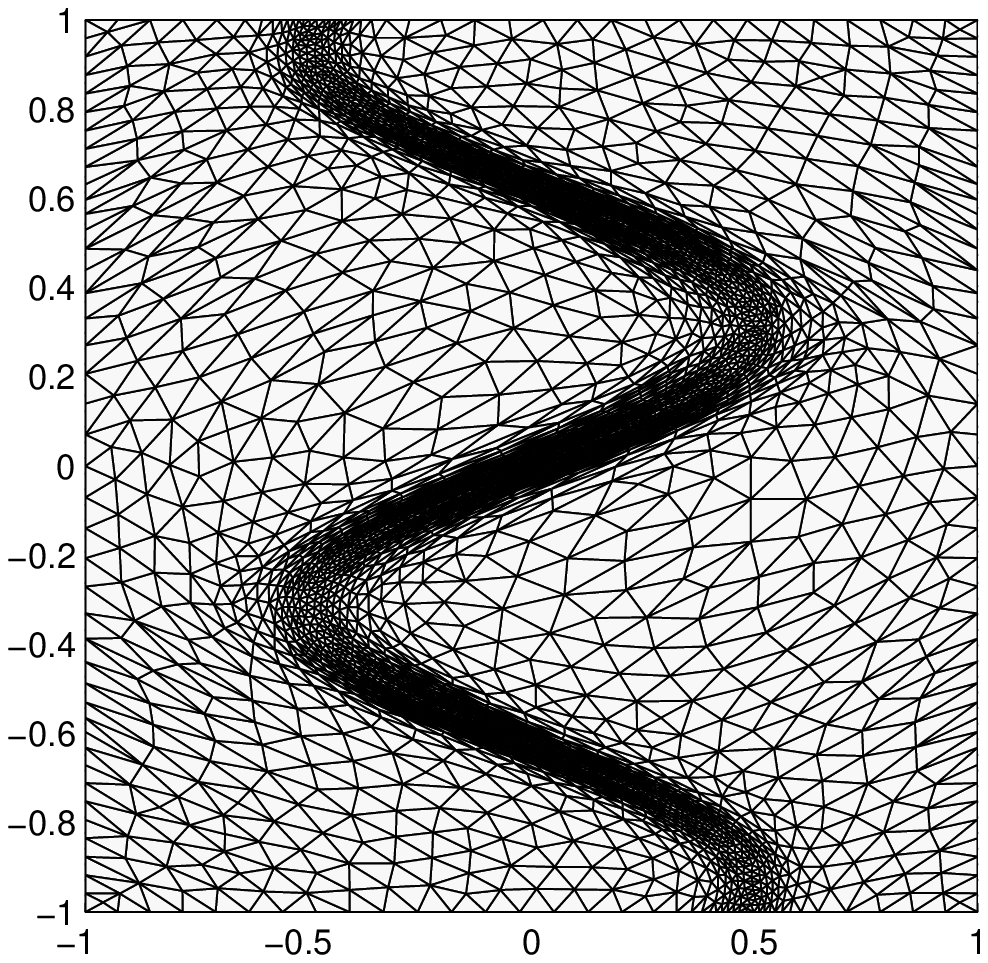}}
\hspace{-1.2cm}
\subfigure[]{\includegraphics[width=6cm]{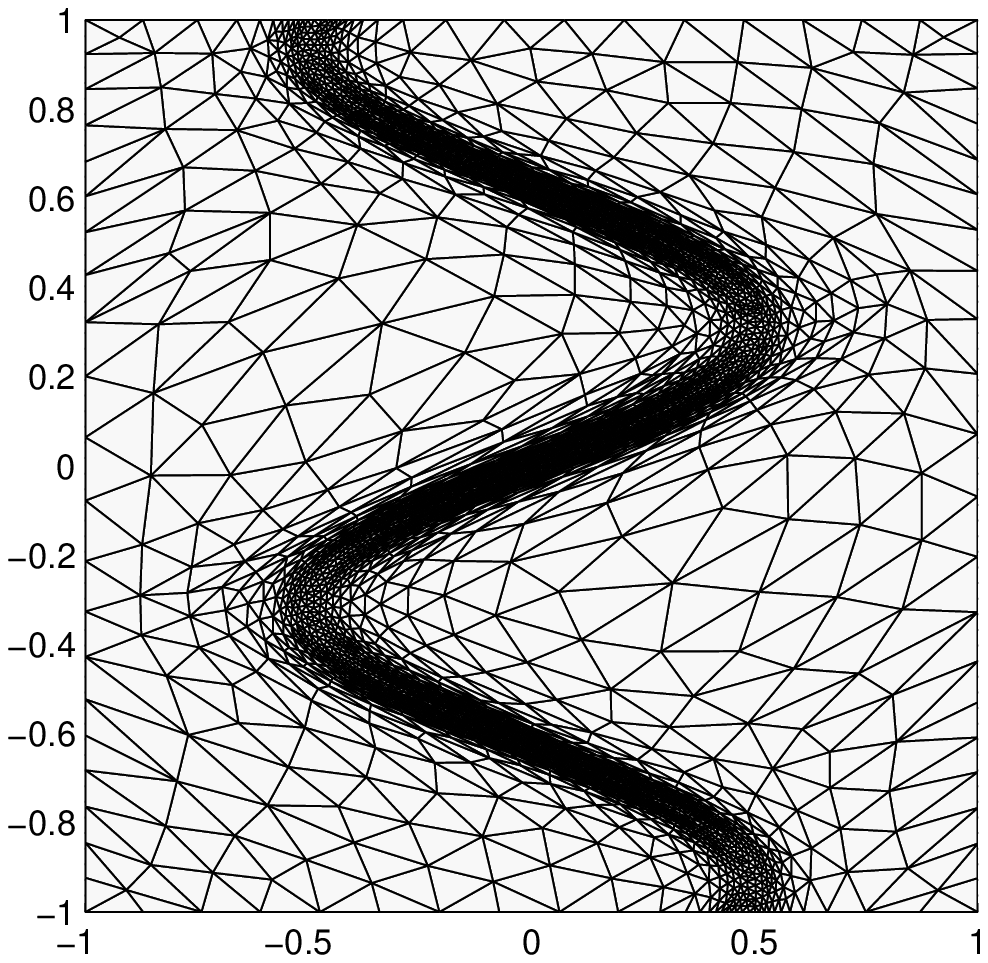}}
\hspace{-1.2cm}
\subfigure[]{\includegraphics[width=6cm]{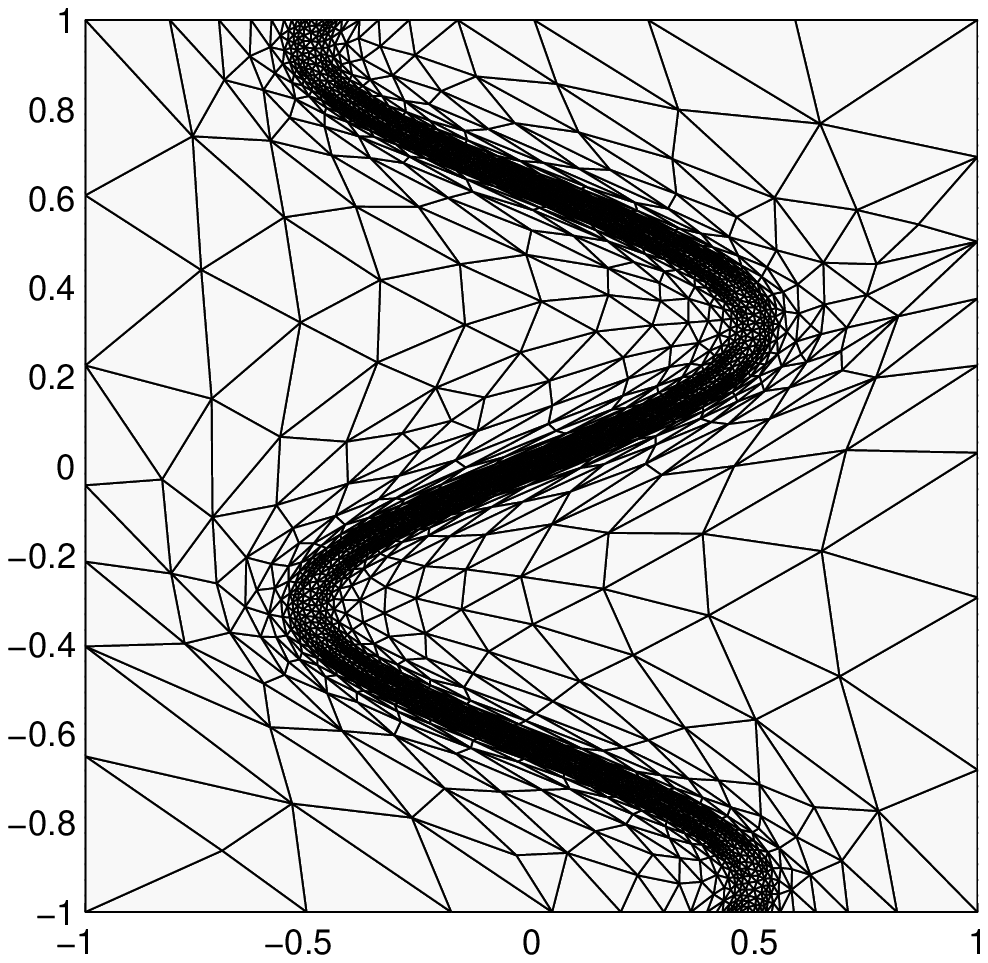}}
\caption{Example 2: Meshes generated by the metric tensor
$\mathcal{M}_{m,p}$ for (a)$m=0,p=1$, (b)$m=0,p=2$,
(c)$m=0,p=\infty$, (d)$m=1,p=1$, (e)$m=1,p=2$,
(f)$m=1,p=\infty$.}\label{Fig_meshes_ZigZag}
\end{figure}

\noindent{\bf Example 3} (Taken from \cite{XieYin}) This example is
to solve the boundary value problem of Poisson's equation
\begin{eqnarray}
-\triangle u&=&f,\quad {\bf x}\in
\Omega\equiv(-1.2,1.2)\times(-1.2,1.2),
\end{eqnarray}
with the Dirichlet boundary condition and the right-hand side term
being chosen such that the exact solution is given by
\begin{eqnarray}
u({\bf x})=\sum_{i=1}^{5}\big[
(1+e^{\frac{x+y-c_i}{2\epsilon}})^{-1}+(1+e^{\frac{x-y-d_i}{2\epsilon}})^{-1}\big],
\end{eqnarray}
where $c_i=0,-0.6,0.6,-1.2,1.2;\, d_i=0,-0.6,0.6,-1.2,1.2.$ The
solution exhibits ten sharp layers on lines $x+y-c_i=0$ and
$x-y-d_i=0$, $i=1,2,\cdots,5$, when $\epsilon$ is small. In our
computations, $\epsilon$ is taken as 0.01. Numerical results in
\cite{XieYin} have shown that our approach's superiority for the
error in $H^{1}$ norm. In the current computation, each run is
stopped after 20 iterations to guarantee that the adaptive procedure
tends towards stability, except that governed by
$\mathcal{M}_{1,\infty}$, which need 30 iterations. We show in
Figure \ref{Fig_Interp_error_Tenlines} the $L^p$ norms of the
interpolation error and its gradient using corresponding metric
tensors, for $p=1,2,4,\infty$. As in Example 1 and Example 2, the
convergence rates for the interpolation error and its gradient here
are always nearly optimal. Another purpose to select this example is
to describe the difference of finding layers using different norms.
In Figure \ref{Fig_meshes_Tenlines_W1p} we list meshes in different
stage during one selected run governed by corresponding metric
tensors. While in Figure \ref{Fig_loops_Tenlines} convergence
history is shown. From the three figures we can learn that most of
the metric tensors can quickly find the layers except the metric
tensor $\mathcal{M}_{1,\infty}$ when dealing with the complex
problems, e.g., with multiple layers.

\begin{figure}
\hspace{-1cm}
\subfigure[]{\includegraphics[width=8cm]{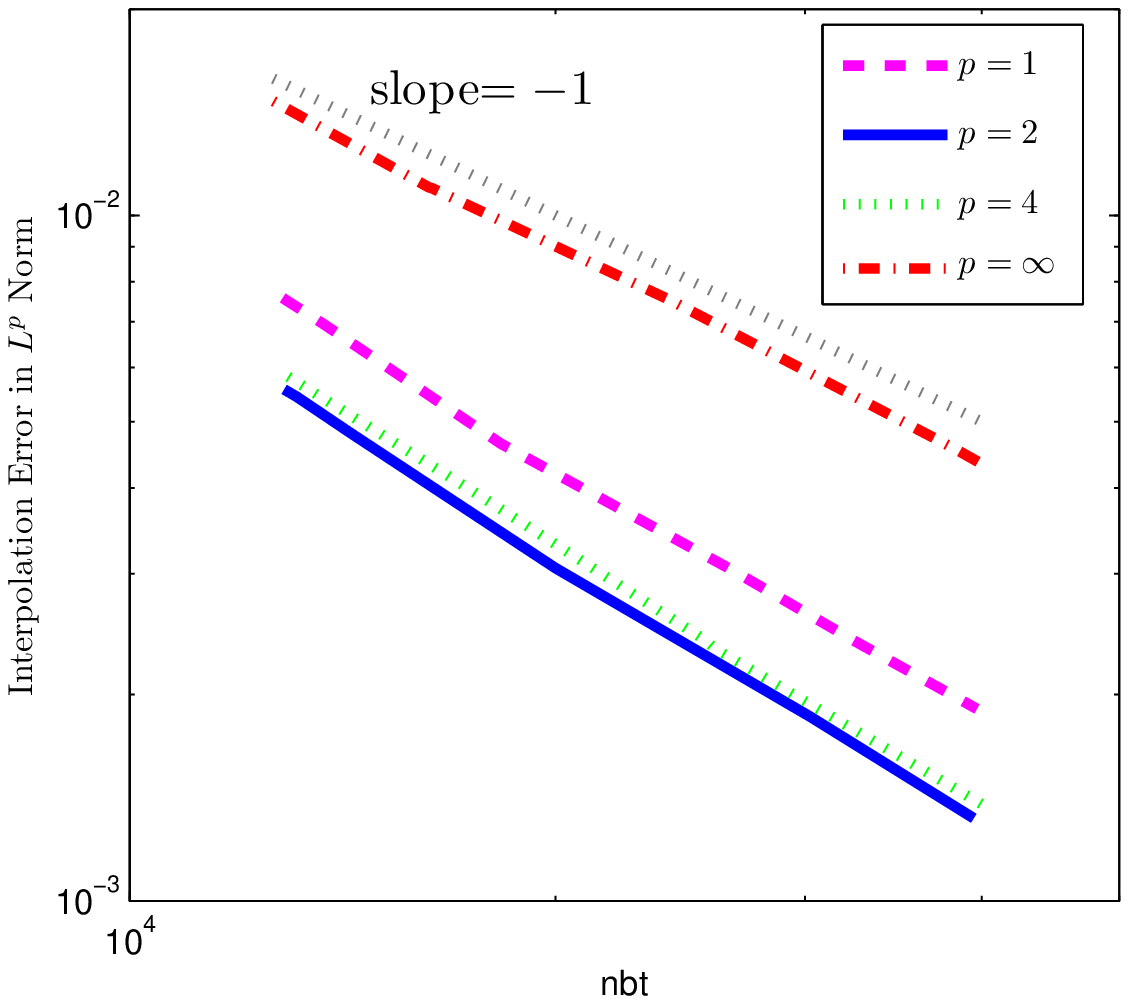}}
\hspace{-1cm}
\subfigure[]{\includegraphics[width=8cm]{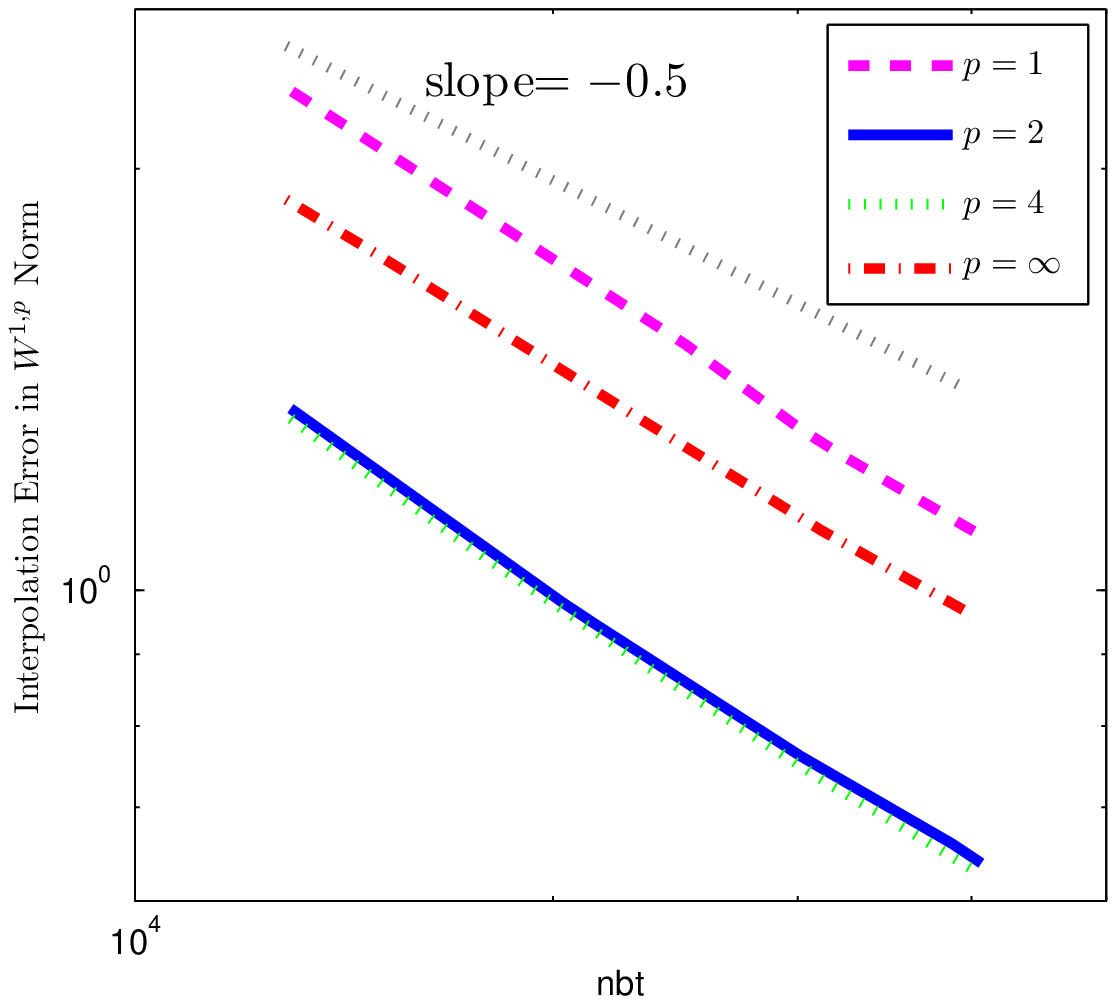}}
\caption{Example 3: Interpolation error and its gradient in $L^p$
norm}\label{Fig_Interp_error_Tenlines}
\end{figure}

\begin{figure}
\vspace{-1cm} \hspace{-1.1cm}
\subfigure[]{\includegraphics[width=6cm]{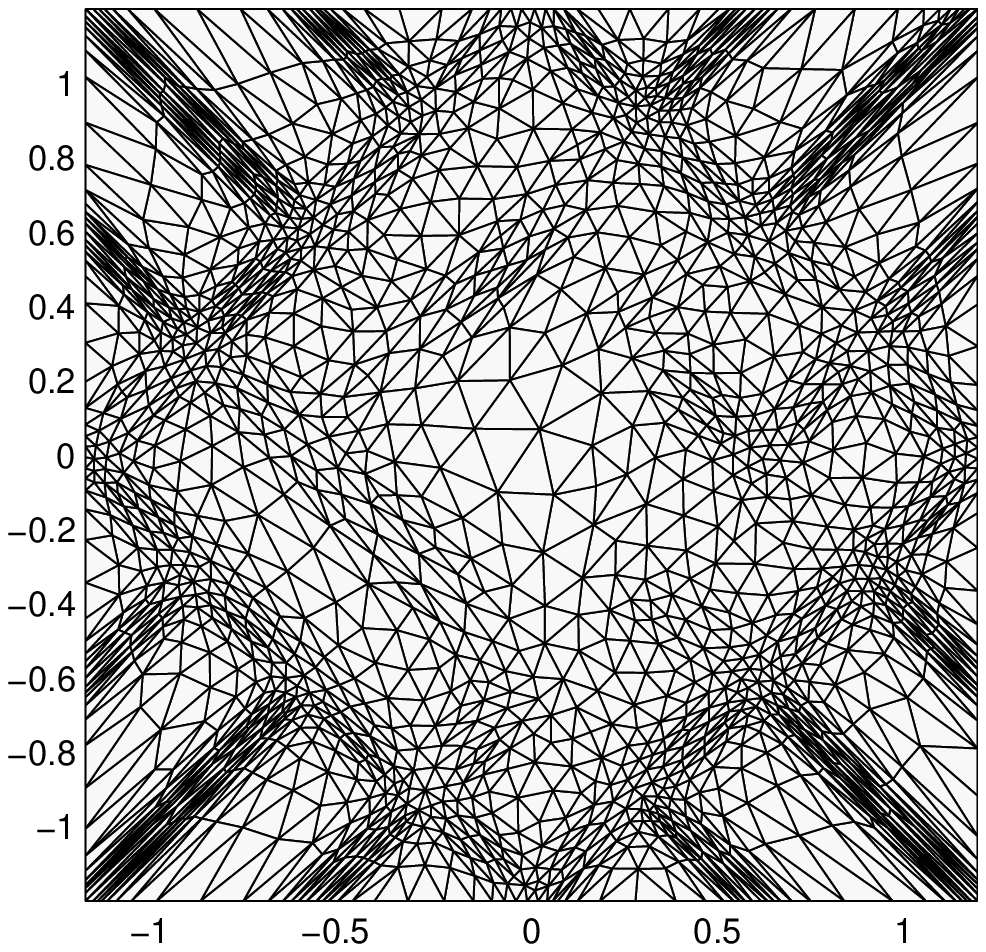}}
\hspace{-1.2cm}
\subfigure[]{\includegraphics[width=6cm]{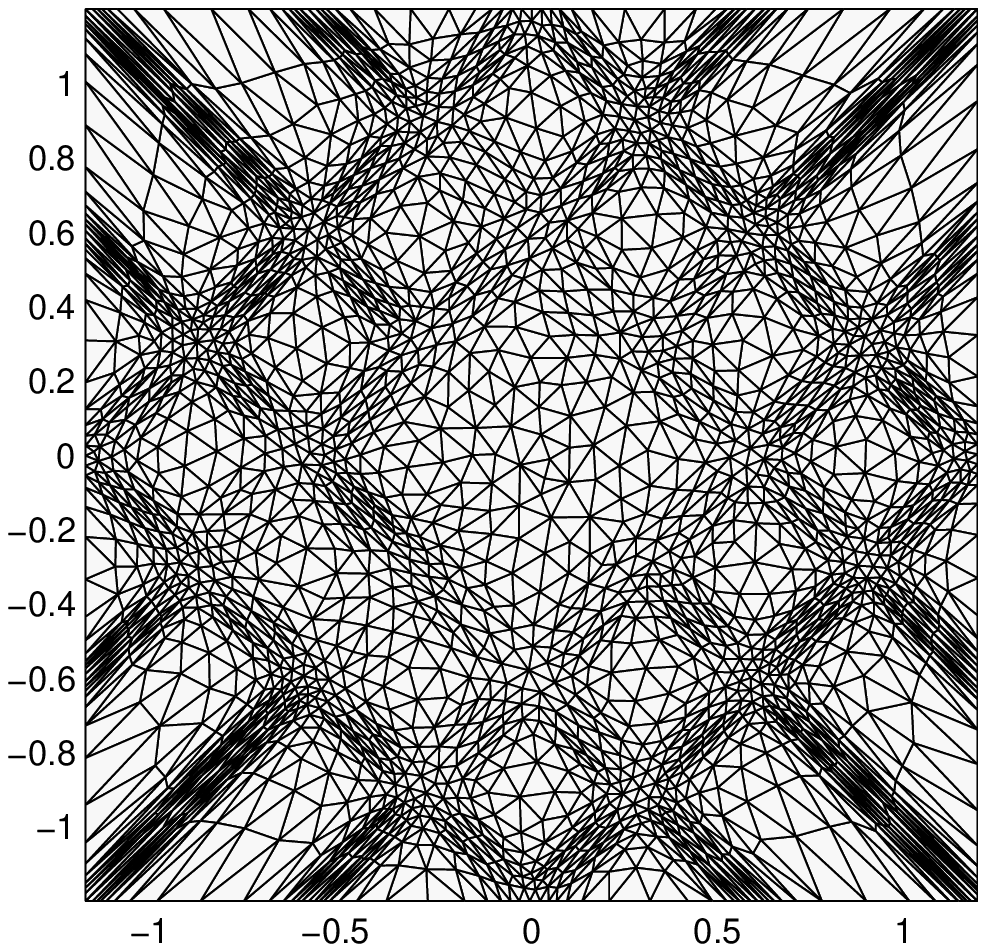}}
\hspace{-1.2cm}
\subfigure[]{\includegraphics[width=6cm]{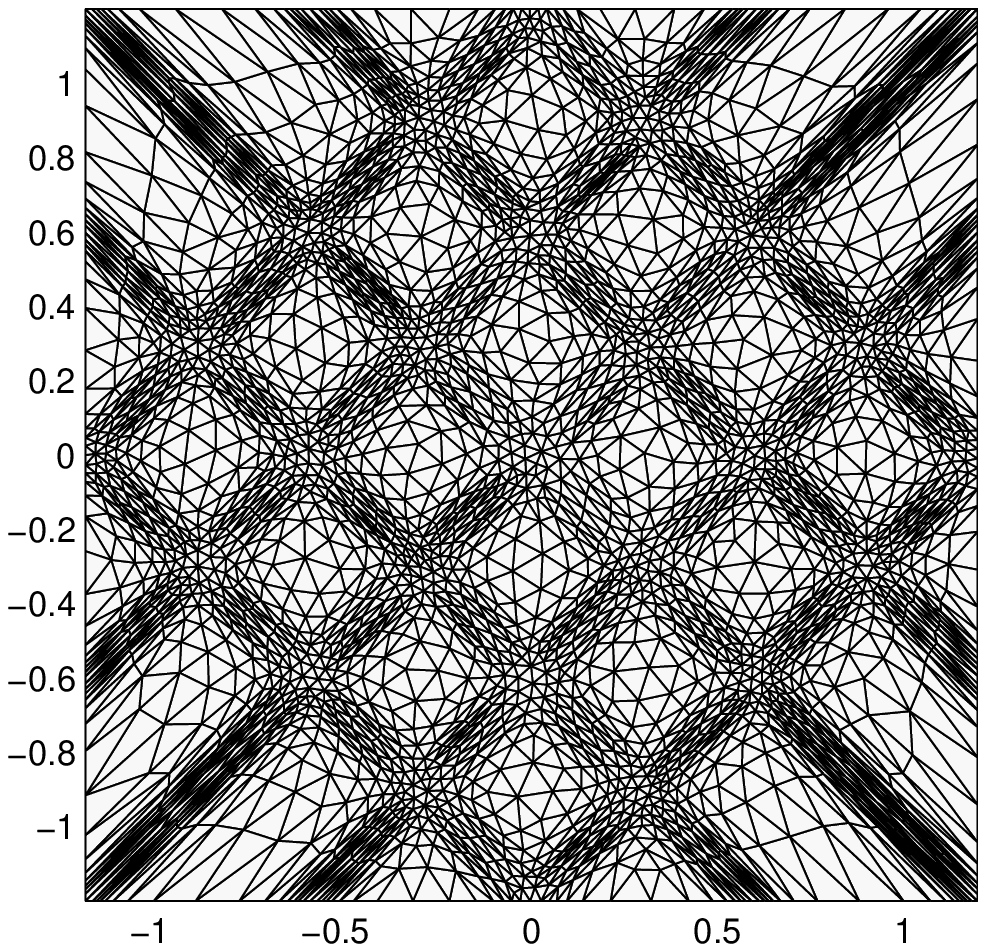}}

\hspace{-1.1cm}
\subfigure[]{\includegraphics[width=6cm]{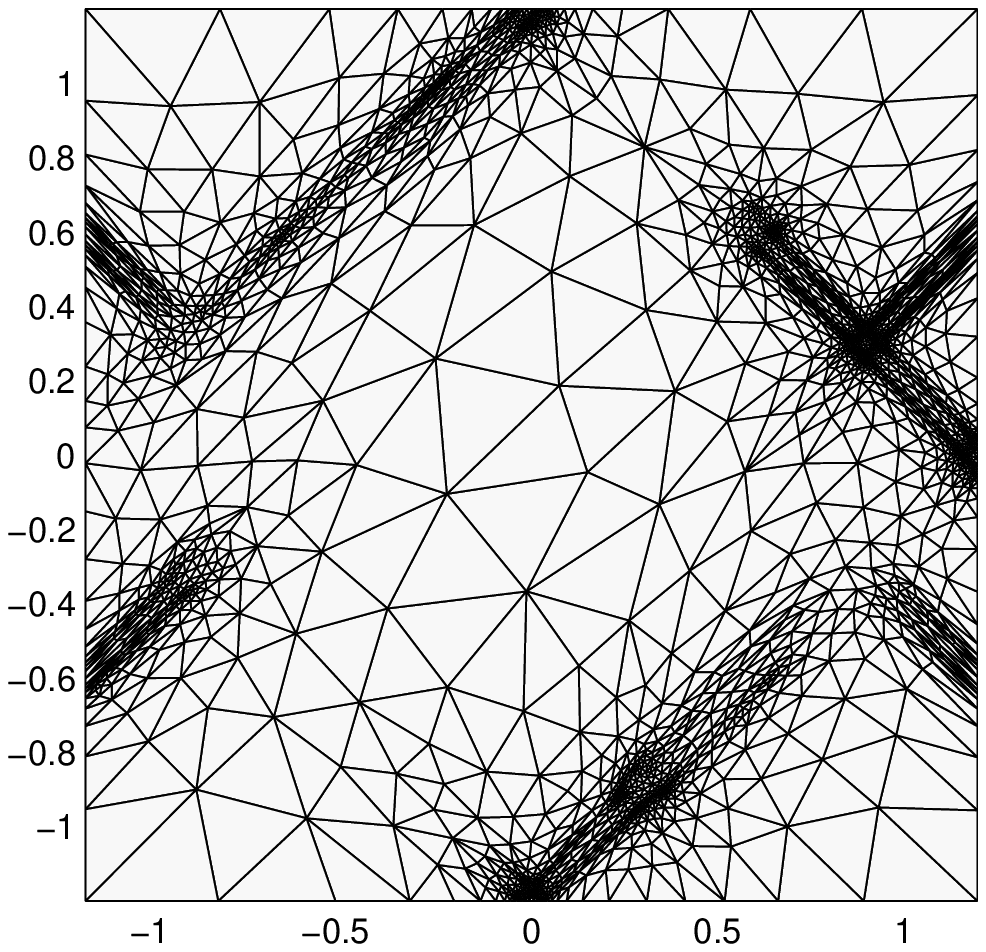}}
\hspace{-1.2cm}
\subfigure[]{\includegraphics[width=6cm]{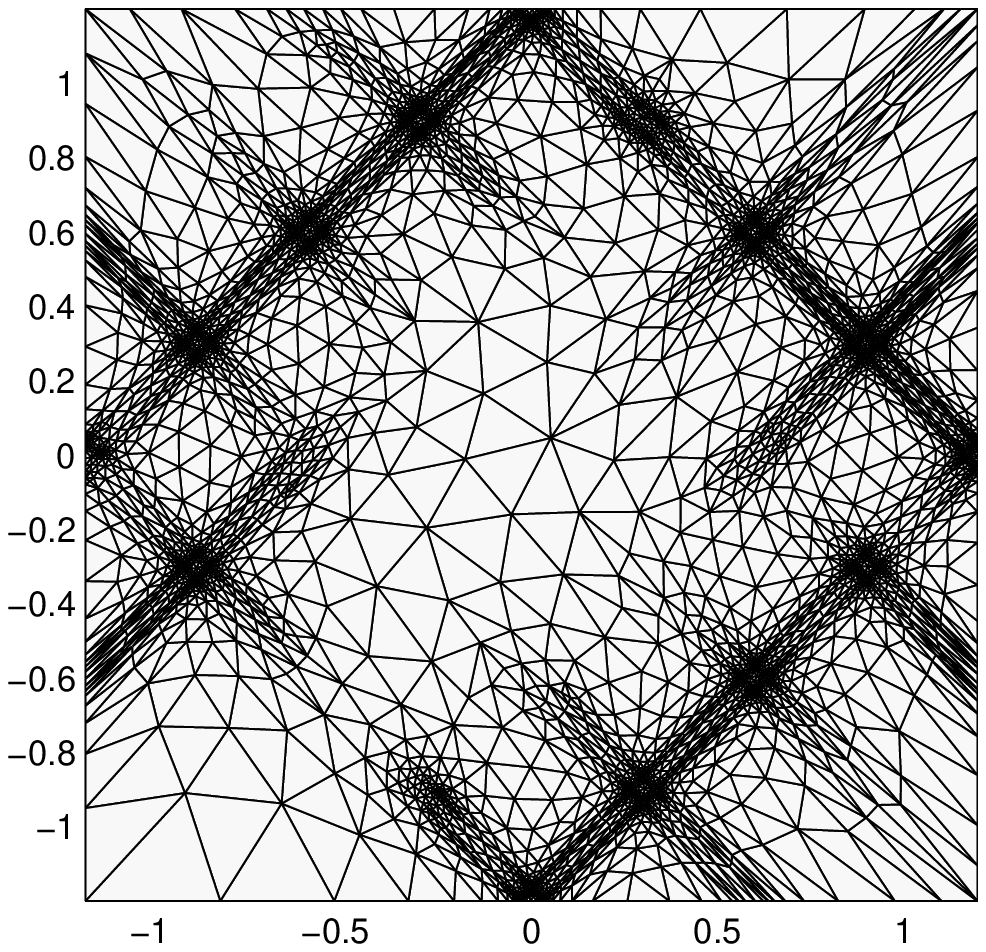}}
\hspace{-1.2cm}
\subfigure[]{\includegraphics[width=6cm]{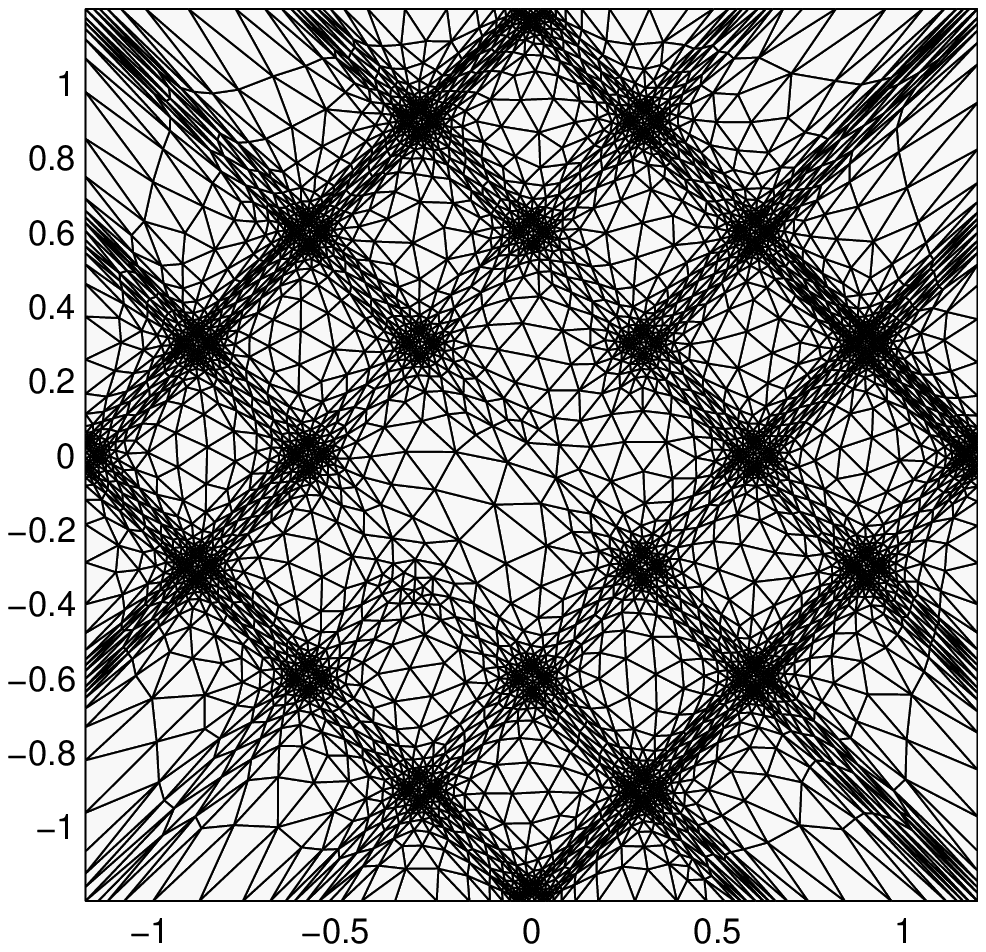}}

\hspace{-1.1cm}
\subfigure[]{\includegraphics[width=6cm]{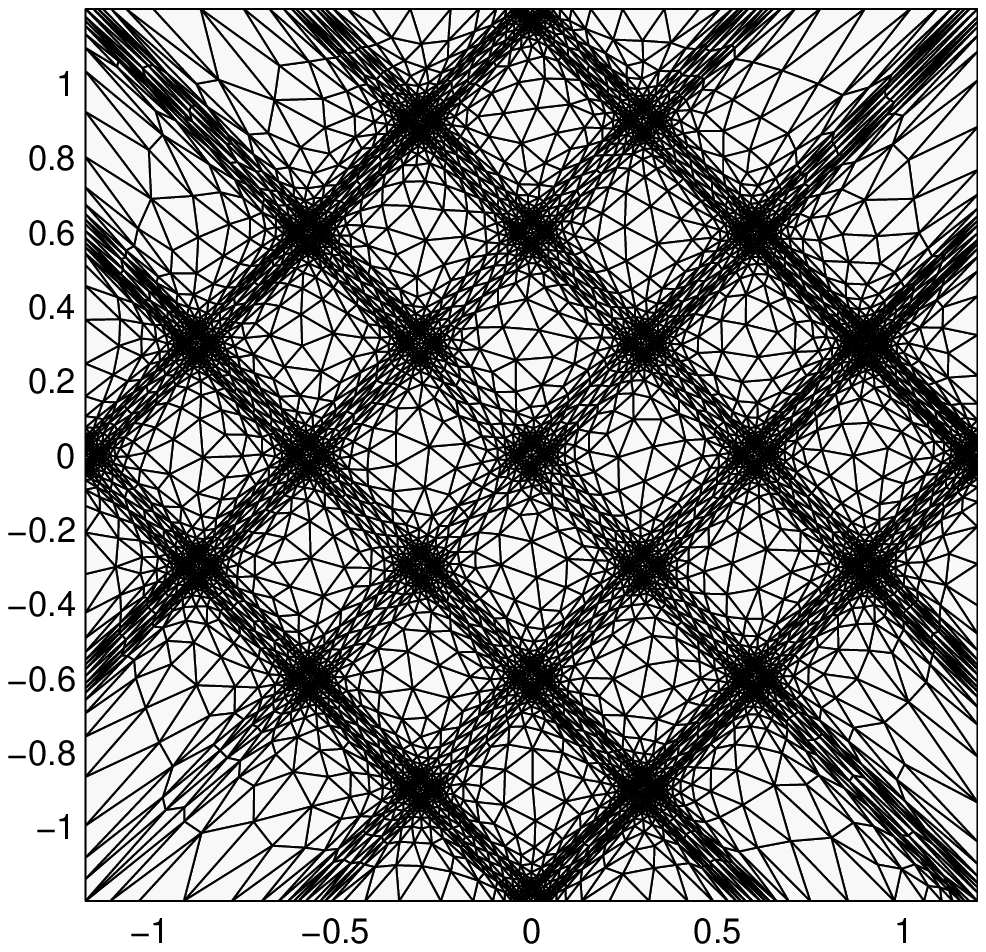}}

\caption{Example 3: Meshes generated by the metric tensors
$\mathcal{M}_{1,2}$ after (a) 5 step, (b) 10 step, (c) 15 step, and
 $\mathcal{M}_{1,\infty}$ after (d) 5 step, (e) 10 step, (f)
15 step, (g) 20 step.}\label{Fig_meshes_Tenlines_W1p}
\end{figure}

\begin{figure}
\hspace{-1cm}
\subfigure[]{\includegraphics[width=8cm]{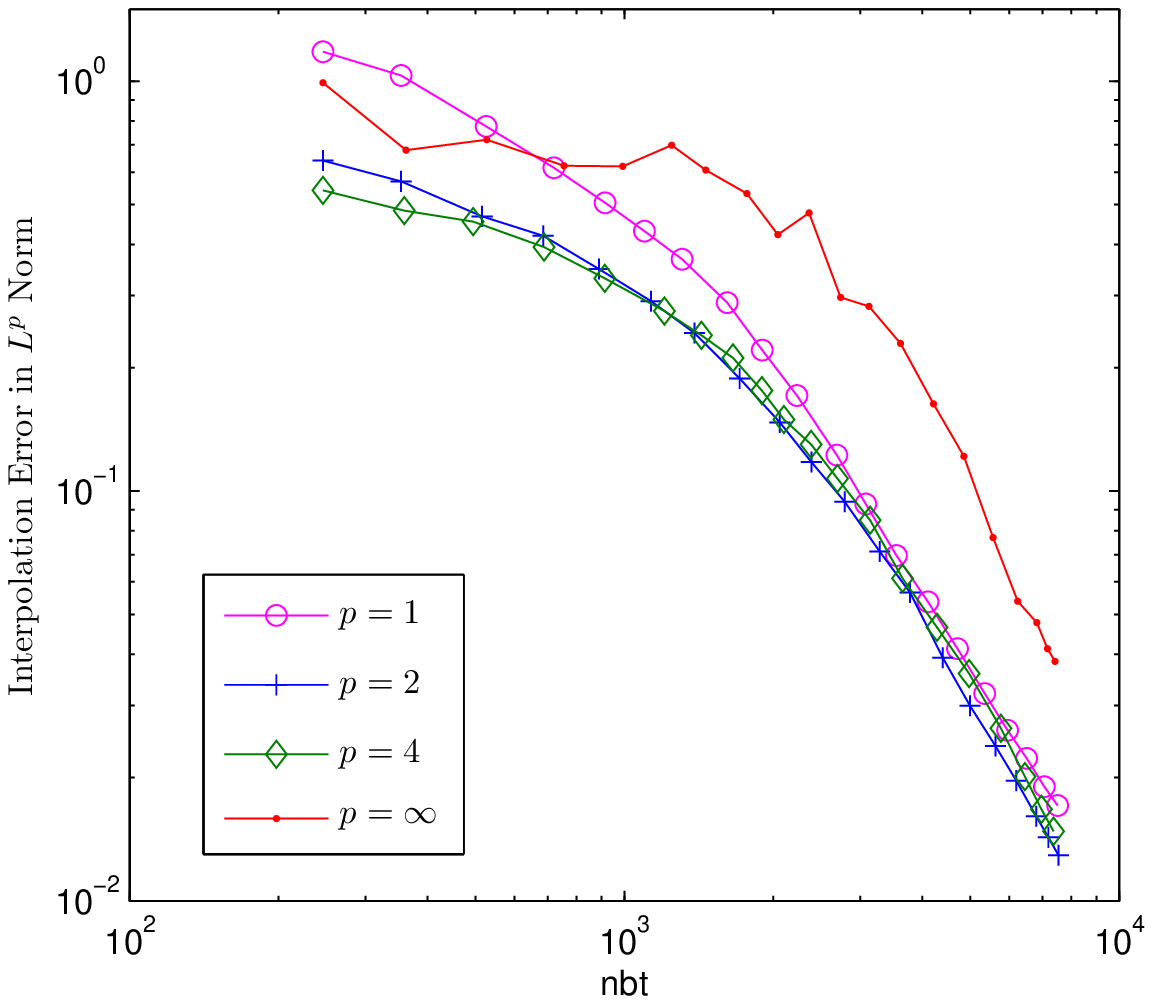}}
\subfigure[]{\includegraphics[width=8cm]{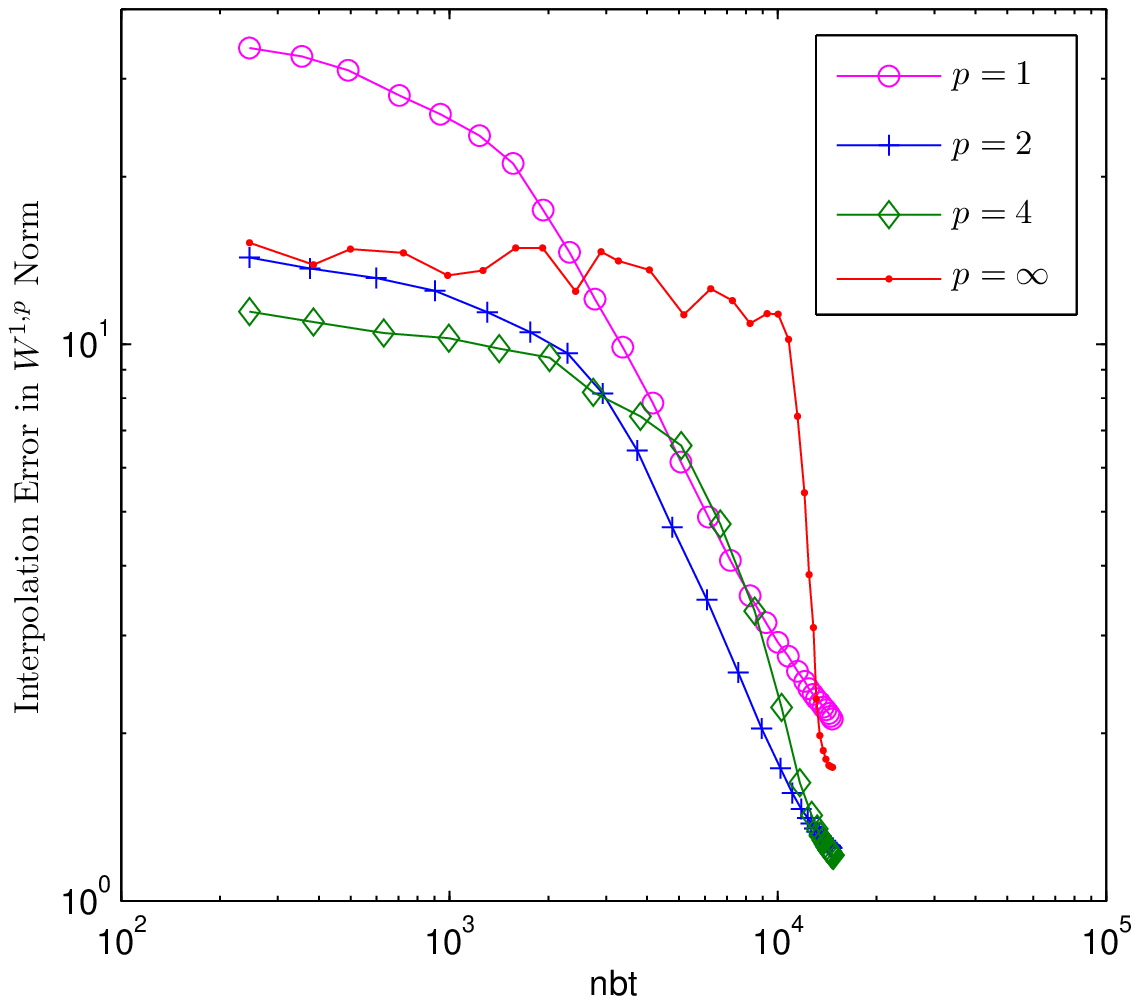}}
\caption{Example 3:  Convergence history versus number of triangles
by the metric tensor $\mathcal{M}_{0,p}$ and $\mathcal{M}_{1,p}$ for
$p=1, 2, 4, \infty$.}\label{Fig_loops_Tenlines}
\end{figure}

\section{Conclusions}
In the previous sections we have developed a uniform strategy to
derive metric tensors in two spatial dimension for interpolation
errors and their gradients in $L^p$ norm. The metric tensor
$\mathcal{M}_{0,p}^{n}$ for the $L^p$ norm of the interpolation
error is similar to some existing methods. However, the metric
tensor $\mathcal{M}_{1,p}^{n}$ is essentially different with those
metric tensors existed. There is a fine distinction between the new
metric tensor $\mathcal{M}_{1,p}^{n}$ and $\mathcal{M}_{1,p}^{h}$
proposed by Huang and Russell\cite{HuangRussell} that the former
refers to $\mbox{tr}(\mathcal{H})$ and the latter involves
$\|\mathcal{H}\|$. In some cases, the two metric tensors are pretty
much alike. However, when dealing with the complex problems, e.g.,
with multiple layers, the effect of the former is superior to the
latter for mesh generation. Numerical results show that the
corresponding convergent rates are always almost optimal.

\end{document}